\let\oldexperiment\experiment
\renewcommand{\experiment}{\oldexperiment\normalfont}
\definecolor{darkgreen}{RGB}{0,150,0}
\definecolor{C0}{RGB}{31,119,180}
\definecolor{C1}{RGB}{255, 127, 14}
\definecolor{MineShaft}{rgb}{0.188,0.188,0.188}
\def\tto{\;{\lower 1pt \hbox{$\rightarrow$}}\kern -10pt
\hbox{\raise 2pt \hbox{$\rightarrow$}}\;}
\def\hat{\widehat}
\def\tilde{\widetilde}
\def\N{\mathbb{N}}
\newcounter{count}
\DeclareMathOperator{\blkdiag}{{blkdiag}}
\pgfplotsset{compat = newest}
\newcommand{\mM}{m}
\newcommand{\Csta}{a}
\newcommand{\Cstb}{b}
\newcommand{\Cstc}{c}
\newcommand{\salg}{SNSM}
\newcommand{\MoreauYosida}[2]{{\mathtt{e}}_{#2} #1}
\newcommand{\Asp}[2]{{{\mathtt{A}}_{#1 }#2}}
\newtheorem{theorem}{Theorem}[section]
\newtheorem{proposition}[theorem]{Proposition}
\newtheorem{definition}[theorem]{Definition}
\newtheorem{assumption}[theorem]{Assumption}
\theoremstyle{remark}
\newtheorem{remark}[theorem]{Remark}
\newtheorem{example}[theorem]{Example}
\DeclareMathOperator*{\argmax}{argmax}
\DeclareMathOperator*{\argmin}{argmin}
\newcommand{\Rex}{\overline{\mathbb{R}}}
\let\epsilon\varepsilon
\DeclareMathAlphabet{\mathpzc}{OT1}{pzc}{m}{it}
\def\R{\mathbb{R}}
\def\N{\mathbb{N}}
\renewcommand{\bfseries}{\fontseries{b}\selectfont} 
\newrobustcmd{\BB}{\bfseries}    
\title{Nonmonotone subgradient methods based on \\ a local descent lemma}
\author{Francisco J.\ Arag\'on-Artacho\thanks{Department of Mathematics,
                             University of Alicante,
                             Alicante, \textsc{Spain}.
                                 Email:~\href{mailto:francisco.aragon@ua.es}
                                 {francisco.aragon@ua.es}}
        \and
       Rub\'en Campoy\thanks{Department of Mathematics,
                             University of Alicante,
                             Alicante, \textsc{Spain}.
                                 Email:~\href{mailto:ruben.campoy.es}
                                 {ruben.campoy@ua.es}}
        \and 
        Pedro P\'erez-Aros\thanks{Departamento de Ingenier\'ia Matem\'atica and Centro de Modelamiento Matem\'atico (CNRS IRL2807), Universidad de Chile, Beauchef 851, Santiago, \textsc{Chile}. 
        Email:~\href{mailto:pperez@dim.uchile.cl}{pperez@dim.uchile.cl}}
         \and
        David Torregrosa-Bel\'en\thanks{Department of Mathematics,
                             University of Alicante,
                             Alicante, \textsc{Spain}.
                                 Email:~\href{mailto:david.torregrosa@ua.es}
                                 {david.torregrosa@ua.es}}
 }
\renewcommand{\maketitle}{
  \begin{center}
    {\LARGE\bfseries\@title\par}
    \vskip 2em
    {\large
      \lineskip .5em
      \begin{tabular}[t]{c}
        \@author
      \end{tabular}\par}
  \end{center}
  \@thanks
}
\begin{document}

\maketitle

\begin{abstract}
In this paper we present a nonmonotone line search subgradient algorithm tailored to upper-$\mathcal{C}^2$ functions. This is a family of nonsmooth and nonconvex functions that satisfies a nonsmooth and local version of the descent lemma, making them suitable for line searches. We prove subsequential convergence of the proposed algorithm to a stationary point of the optimization problem. Our approach allows us to cover the setting of various subgradient algorithms, including Newton and quasi-Newton methods. In addition, we propose a specification of the general scheme, named Self-adaptive Nonmonotone Subgradient Method (SNSM), which automatically updates the parameters of the line search. Particular attention is paid to the minimum sum-of-squares clustering problem, for which we provide a concrete implementation of SNSM. We conclude with some numerical experiments where we exhibit the advantages of SNSM in comparison with some known algorithms.
\end{abstract}

\paragraph{Keywords} Nonsmooth optimization · subgradient method · upper-$\mathcal{C}^2$ functions · descent lemma ·  nonmonotone linesearch · minimum sum-of-squares clustering · quadratic optimization
\paragraph{MSC2020} 90C26 · 90C30 · 90C53 · 49J52 · 65K05 

\section{Introduction}\label{intro_old}

In this work, we propose a nonmonotone subgradient algorithm for solving the general nonconvex unconstrained program
\begin{equation}\label{eq:P}\tag{P}
\min_{x \in \mathbb{R}^n} \varphi(x),
\end{equation}
where the objective function \( \varphi: \mathbb{R}^n \to \mathbb{R} \) is assumed to be upper-\( \mathcal{C}^2 \). Roughly speaking, these functions admit a local representation as a minimum of twice-continuously differentiable functions (see Definition~\ref{def:upperC2}). Upper-$\mathcal{C}^2$ functions  satisfy a nonsmooth local version of the descent lemma, a property that makes them suitable for extending the framework of line search methods (for details, see Section~\ref{sect:upperC2}). This is important in practice, since upper-$\mathcal{C}^2$ functions naturally arise in different applications, as data mining (see Section~\ref{sect:appl}).

Upper-$\mathcal{C}^2$ functions can equivalently be characterized as those that locally admit a difference of convex (DC) decomposition consisting of a smooth and a possibly nonsmooth convex functions (see Proposition~\ref{p:deslemma}). This might lead one to think that minimizing an upper-$\mathcal{C}^2$ function can be reduced to a standard DC programming task. However, this approach presents practical challenges. On the one hand, not all upper-$\mathcal{C}^2$ functions are globally DC (see Example~\ref{example01}). On the other hand, it requires prior knowledge of a suitable DC decomposition. For instance, given a DC function $f=g_1-g_2$ for some convex functions $g_1$ and $g_2$, it is often unclear which decomposition is optimal for numerical purposes, since $f=(g_1+h)-(g_2+h)$ yields another valid DC decomposition for any convex function $h$. This difficulty can be circumvented by treating the function $f$ as a unified entity, thereby eliminating the need for a specific DC decomposition.

Let us recall that descent line search methods are a class of iterative optimization algorithms designed to find (local) minima of differentiable functions. Their core technique consists in updating the current iterate by moving in a direction that reduces the value of the objective function. Hence, given a starting point $x_0\in\R^n$, a general iteration step is defined by
\[
x_{k+1} = x_k + \tau_k d_k,
\]
where $\tau_k>0$ is a stepsize (also known as learning rate in the machine learning community) and $d_k\in\R^n$ is a descent direction. One of the most fundamental choices is to take $d_k = -\nabla \varphi (x_k)$, with $\nabla \varphi$ denoting the gradient of the objective function, leading to the well-known \textit{gradient descent} method. While simple and broadly applicable, gradient descent can suffer from slow convergence. To improve convergence rates, descent methods can incorporate second-order information. This leads to the development of \textit{Newton} and \textit{quasi-Newton} methods, where the descent direction $d_k$ is computed by solving the linear system
\[
- \nabla \varphi(x_k) = B_k d_k,
\]
where $B_k$ is a symmetric positive definite matrix approximating (or equal to) the Hessian of the function. In Newton's method, $B_k$ is the exact Hessian, while quasi-Newton methods use an approximation built from gradient evaluations across iterations (see, e.g., \cite{MR3890045,MR2244940} and the references therein).

While most DC algorithms only use first-order information, a coderivative-based semi-Newton method for difference programming has been recently proposed in~\cite{aragonartacho2023coderivativebased}. When applied to a DC decomposition whose first component is twice differentiable, this method employs a Newton-type descent direction based on the Hessian of that component. However, since second-order information of the second function is not used, the method fails to capture the full curvature of the objective function. This limitation is evident in the minimum sum-of-squares clustering problem (see Section~\ref{sect:appl}). For this problem, we demonstrate that effective second-order information can be successfully extracted and utilized by working directly with the upper-$\mathcal{C}^2$ function, which yields significantly faster algorithmic schemes (see Subsection~\ref{subsec:SOS}). This further motivates our development of numerical schemes tailored to upper-$\mathcal{C}^2$ functions, avoiding the need to rely on DC decompositions.

In contrast to classical methods, nonmonotone descent schemes allow occasional increases in the objective function during the optimization process. These methods relax the strict requirement of monotonic decrease, provided that overall progress toward a minimum is ensured. The procedure was first introduced in the context of Newton-type methods in \cite{MR849278}, and has since been extended and successfully applied in broader settings. Nonmonotone strategies are particularly effective for nonconvex or large-scale problems, where traditional methods may stagnate. For further developments and applications, see, e.g.,~\cite{MR1883074,MR2112963,MR993007,MR1430555,MR2903511} and the references therein.

In this manuscript, we propose a nonmonotone scheme for problem~\eqref{eq:P}. Our approach is inspired by the seminal work~\cite{MR849278}, which is adapted to the current setting where the objective function may not be differentiable. To this end, we replace standard gradient information with subgradients. Notably, our method allows for any choice of subgradient. Indeed, for the class of \mbox{upper-$\mathcal{C}^2$} functions, we show that any subgradient can be used to construct a local quadratic upper approximation of the objective function. This leads to a generalized version of the classical descent lemma suited to our nonsmooth setting.

Although nonmonotone line searches bring opportunities to improve the performance of optimization algorithms, they also require the tuning of additional parameters, such as the line search stepsize (which is initially chosen at each iteration) and the memory parameter (which controls how many past function values are taken into account). A wrong adjustment of these parameters may slow down the resulting algorithm, and one may lose all possible benefits of nonmonotone line searches. With this potential weakness in mind, we propose a specification of the main scheme, that we name the \emph{Self-adaptive Nonmonotone Subgradient Method} (\emph{\salg}), which incorporates an automatic procedure for selecting the stepsize and memory parameters of the line search. Other possibilities that would be interesting to investigate include the nonmonotone strategy proposed by Zhang and Hager~\cite{MR2112963}, based on an average of function values.

The main contributions of this work are summarized as follows:

\begin{itemize}
\item We characterize the variational properties that make upper-$\mathcal{C}^2$ functions suitable for optimization, allowing us to subsume  existing approaches within a unified framework. Furthermore, we demonstrate their prevalence in optimization and provide examples to facilitate their identification.

\item We present a line search subgradient method tailored to upper-$\mathcal{C}^2$ functions (Algorithm~\ref{alg:1}), which permits the incorporation of second-order information. We prove its global subsequential convergence to stationary points of problem~\eqref{eq:P}. Additionally, we propose a self-adaptive version of this general method (Algorithm~\ref{alg:SNSM}) to mitigate the effects of suboptimal parameter configurations.

\item We conduct extensive numerical comparisons against specialized algorithms. First, we demonstrate how the nonmonotone line search generally leads to improved solutions for minimizing quadratic objectives with integer constraints. Second, we show that incorporating second-order information---which is typically lost in standard DC approaches---results in significantly superior algorithmic performance for the minimum sum-of-squares clustering problem. Overall, our experiments indicate that our method is competitive with modern large-scale solvers.
\end{itemize}

The remainder of the paper is organized as follows. In Section~\ref{sect:preliminary}, we present some preliminary concepts and fix the notation. The class of upper-$\mathcal{C}^2$ functions is thoroughly introduced in Section~\ref{sect:upperC2}. Several examples are given, showing the breadth of this family of functions. In Section~\ref{sect:nlsm}, we present our nonmonotone line search method tailored to upper-$\mathcal{C}^2$ functions, and we introduce its specification SNSM in Section~\ref{sect:sa}.
The applicability of the proposed scheme to a problem in data mining, namely, the minimum sum-of-squares clustering problem, is detailed in Section~\ref{sect:appl}, where we specify a concrete implementation of \salg\ for this setting. Next, we report in Section~\ref{sect:numerics} some numerical experiments on this problem and on an integer-constrained quadratic optimization problem, illustrating the competitive performance of our method. Finally, some conclusions are drawn in Section~\ref{sect:conc}.

\section{Preliminaries and notation}\label{sect:preliminary}
Throughout this paper, we write $\R^n$  for the Euclidean space of dimension~$n$, and the Euclidean norm and inner product are denoted by~$\| \cdot\|$ and~$\langle \cdot, \cdot \rangle$, respectively.
We use $\lambda_{\max}(B)$ for the largest eigenvalue of a symmetric matrix~$B$, and $\lambda_{\min}(B)$ for the smallest. Given $a\in\R$, we write $[a]^+$  to denote its positive part.

 Given some constant $L\geq 0$, a vector-valued function $F: C\subseteq \mathbb{R}^n \to \R^m$ is said to be  $L$-\emph{Lipschitz continuous} on $C$ if
 \begin{equation*}
 	\|F(x)-F(y)\| \leq L \|x-y\|,\quad \text{for all } x,y\in C,
 \end{equation*}
 and \emph{locally Lipschitz continuous} around $\bar x\in C$ if it is Lipschitz continuous in some neighborhood of $\bar x$.

A function $f:\R^n\to\R$ is said to be of class $\mathcal{C}^k$ if it is differentiable and all its partial derivatives up to order $k$ are continuous. In particular, we say that a function $f$ is  \emph{smooth} if it is of class $\mathcal{C}^1$, i.e., it  is differentiable with continuous gradient. If, in addition, its gradient is $L$-Lipschitz continuous we say that $f$ is $L$-smooth. The class of differentiable functions with a locally Lipschitz continuous gradient is denoted by $\mathcal{C}^{1,+}$.  We represent the gradient and Hessian  of $f$ as $\nabla f$ and  $\nabla^2 f$, respectively.

Given a locally Lipschitz continuous function $f$ around $\bar x\in \mathbb{R}^n$, the \emph{Clarke subdifferential} of $f$ at $\bar{x}$ is defined as
\[
\partial f(\bar{x}):= \left\{   v \in \mathbb{R}^n :  \langle v, h \rangle \leq f^\circ (\bar{x};h) \text{ for all } h \in \R^n			 \right\},
\]
where $f^\circ (\bar{x};h) $ stands for the  \emph{Clarke directional derivative} at $\bar x$ with respect to the direction $h$, that is,
\begin{align*}
	 f^\circ (\bar{x};h) := \limsup_{x \to \bar x, t \to  0^+ } \frac{  f(x + th ) - f(x)    }{t}.
\end{align*}

	Let $f:\R^n \to\R $ be a locally Lipschitz function around $\bar x$.  	We say that  $f$ is \emph{prox-regular} at a point point $\bar{x}$ if for every $\bar{v}\in\partial f(\bar{x})$ there exist $\varepsilon>0$ and $\rho>0$ such that
	\[
	f(\tilde{x}) \geq f(x) +  \langle v, \tilde{x}-x  \rangle - \frac{\rho}{2}\|\tilde{x}-x\|^2, \text{ for all } \|\tilde{x}- \bar{x}\|\leq \varepsilon,
	\]
	when $v\in\partial f(x)$, $\|v-\bar{v}\|<\varepsilon$, $\|x-\bar{x}\|<\varepsilon$ and $f(x) < f(\bar{x}) +\varepsilon$. When   $f$ is prox-regular at every point, we simply say that $f$ is prox-regular (see, e.g., \cite{MR3823783,MR1491362,zbMATH00914945,zbMATH07502906} for more details).

Finally,  the \emph{upper Dini directional derivative} of $f:\R^n\to\R$ at some point $\bar{x}\in\R^n$ in the direction $d\in\R^n$ is defined as
$$d^{+} f(\bar{x} ; d):=\limsup_{t \to 0^+} \frac{f(\bar{x}+t d)-f(\bar{x})}{t}.$$

\section{Upper-$\mathcal{C}^2$ functions and the descent lemma} \label{sect:upperC2}

Most known gradient-based algorithms are designed to ensure descent of the objective function per iteration. Many of the algorithms proposed in the literature that  rely exclusively on first- and second-order information are developed for the setting of differentiable functions with at least locally Lipschitz continuous gradients. This class of functions is particularly important in both theoretical and practical optimization. Its significance lies in the fact that such functions satisfy the so-called \emph{descent lemma} (see, e.g.,~\cite[Lemma~A.11]{MR3289054}). More precisely, for a differentiable function
$
f:C \subseteq \mathbb{R}^n \to \mathbb{R}
$
with an \mbox{$L$-Lipschitz} continuous gradient on a nonempty open convex set $C$, it holds that
\begin{equation}\label{eq:descentlemmaineq}
	f(y) \leq f(x) + \langle \nabla f(x), y - x \rangle + \frac{L}{2} \|y - x\|^2, \quad \text{for all } x, y \in C.
\end{equation}
Recall that  $L$-smooth functions are those with global Lipschitz continuous gradients. Consequently, these functions  verify the descent lemma~\eqref{eq:descentlemmaineq} for \mbox{$C=\mathbb{R}^n$}.
 In contrast, for smooth functions whose gradients are only locally Lipschitz continuous on $\mathbb{R}^n$ (i.e., functions belonging to the class $\mathcal{C}^{1,+}$), the descent lemma may hold only locally around every point of the space. In such a case, for each $\bar{x}\in\mathbb{R}^n$, the descent lemma yields
 \begin{equation*}
	f(y) \leq f(x) + \langle \nabla f(x), y - x \rangle + \frac{L(\bar{x})}{2} \|y - x\|^2, \quad \text{for all } x, y \in U(\bar{x}),
\end{equation*}
where $U(\bar{x})$ is an open convex neighborhood  of $\bar{x}$ and $L(\bar{x})\geq 0$ is a constant depending on both $\bar{x}$ and $U(\bar{x})$.

In recent works, certain classes of nonsmooth functions have been successfully incorporated into descent algorithms (see, e.g., \cite{MR4078808,Aragon2020,ferreira2021boosted,aragonartacho2023boosted,aragonartacho2023coderivativebased}). These studies demonstrate that, for such classes of mappings, a sufficient decrease condition analogous to the classical descent lemma can be established by replacing the gradient with a suitable subgradient in~\eqref{eq:descentlemmaineq}.

The work~\cite{aragonartacho2023coderivativebased} investigated objective functions that can be expressed as the difference of two functions, namely $\varphi := g - h$, where $g$ is differentiable with a locally Lipschitz continuous gradient, and $h$ is a locally Lipschitz continuous prox-regular function (see, e.g., \cite[Definition 13.27]{MR1491362}).  The authors developed a descent method for the objective function $\varphi$, where descent directions are generated using coderivative information. Among the results presented, they proved that this class of functions satisfies a local version of the descent lemma (see~\cite[Lemma~3.6]{aragonartacho2023coderivativebased}).

Later, the authors of~\cite{aragonartacho2023boosted} proposed a proximal algorithm for structured problems of the form
\begin{equation*}
	\min_{x\in\mathbb{R}^n} f(x) +g(x)-\sum_{i =1 }^p h_i(\mathrm{\Psi}_i (x)),
\end{equation*}
where the function $f:\R^n\to\R$ is locally Lipschitz, $g:\R^n\to{\Rex:={]-\infty,+\infty]}}$ is lower-semicontinuous and prox-bounded, $h_i:\R^{m_i}\to\R$ are convex and continuous functions, and $\mathrm{\Psi}_i:\R^n\to\R^{m_i}$ are differentiable functions with Lipschitz continuous gradients, for $i\in\{1,\ldots,p\}$. In that setting, they showed that it is possible to consider subgradient information of the function $f$, provided that this function  belongs to the class of $\kappa$-upper-$\mathcal{C}^2$ functions, for some $\kappa\geq 0$. This means that $f$ can be expressed as the optimal value of a family of quadratic functions over a compact space $C$, in the sense that
\[
f(x) = \min_{c \in C} \left\{ \kappa\|x\|^2 - \langle a(c), x \rangle + b(c)\right\},\text{ for all }x \in \mathbb{R}^n,
\]
where $a$ and $b$ are continuous functions. It was shown in~\cite{aragonartacho2023boosted} that such functions satisfy a variant of the inequality~\eqref{eq:descentlemmaineq}, where the role of the gradient is replaced by any subgradient, and the constant $\frac{L_f}{2}$ in the quadratic term is precisely the parameter $\kappa$.

Let us recall a related definition extracted from the book of Rockafellar and Wets (see~\cite [Definition~10.29]{MR1491362}) that will serve us to encompass both frameworks~\cite{aragonartacho2023coderivativebased} and~\cite{aragonartacho2023boosted}. It refers to a class of functions that is usually known as \emph{lower-$\mathcal{C}^2$} functions (correspondingly, a function $f$ is called \emph{upper-$\mathcal{C}^2$} if $-f$ is lower-$\mathcal{C}^2$). The properties of these functions have been well-studied in the context of variational analysis (see, e.g.,~\cite{MR1363364,MR2642725} and the references therein). However, to the best of our knowledge, this class has not received much attention in the numerical optimization literature. Beyond the already mentioned exceptions, we are only aware of~\cite{MR4635008}, where a sequential quadratic programming trust-region method is proposed for the problem of minimizing  a Lipschitz continuous and upper-$\mathcal{C}^2$ function in a bounded region including $L$-smooth equality constraints, and of~\cite{deOliveira2023}, where the author studies a Frank–Wolfe method for nonsmooth upper-$\mathcal{C}^{1,\alpha}$ functions, relying on a generalized descent lemma for this class (see~\cite[Lemma~5]{deOliveira2023}). It is also worth mentioning that, in~\cite[Chapter~14]{WWBook}, the authors explore the structure of differences of weakly convex functions, consequently  upper-$\mathcal{C}^2$, as upper-bounded models for nonsmooth optimization problems.

\begin{definition}[Upper-$\mathcal{C}^2$ functions]\label{def:upperC2}
	Let $U$ be an open set of $\R^n$. We say that a function $\varphi:U \to \R$ is \emph{upper-$\mathcal{C}^2$} on $U$, if on some  neighborhood $V$ of each $\bar{x}\in U$ there is a representation
	\[
	\varphi(x) = \min_{c\in C} \varphi_c(x),
	\]
	where the functions $\varphi_c$ are of class $\mathcal{C}^2$ on $V$, and $C$ is a compact set (in some topological space) such that $\varphi_c$ and its first- and second-order partial derivatives depend  continuously on $(x,c)\in V\times C$. Further, we say that $\varphi$ is \emph{upper-$\mathcal{C}^2$} around $\bar x$, provided that it is upper-$\mathcal{C}^2$ on some neighborhood of $\bar{x}$.
\end{definition}

It is easy to see that a function $\varphi$ is upper-$\mathcal{C}^2$ on an open set $U$ if and only if it is  upper-$\mathcal{C}^2$ around any point $x\in U$. Moreover, by~\cite[Theorem~10.31]{MR1491362}, these functions are locally Lipschitz. The next result summarizes some useful characterizations of upper-$\mathcal{C}^2$ functions.
\begin{proposition}\label{p:deslemma}
	Let $U$ be an open set such that $\varphi:\R^n\to\R$ is locally Lipschitz on $U$. The following assertions are equivalent:
	\begin{enumerate}[label=(\roman*)]
		\item\label{it:p25-0} $\varphi$ is upper-$\mathcal{C}^2$ on $U$;
		\item\label{it:p25-i} for each $\bar{x}\in U$, there exist a constant $\kappa\geq 0$, some neighborhood $V$ of $\bar x$, a compact set $C$ (in some topological space) and some continuous functions $a:C\to\R^n$ and $b:C\to\R$ such that
		\begin{align}\label{eq:reprupperC2}
				\varphi(x) = \min_{c\in C} \left\{ \kappa \|x\|^2 - \langle a(c),x\rangle - b(c) \right\}, \quad \text{for all } x\in V;
		\end{align}

		\item\label{it:p25-iii} for each $\bar{x}\in U$, there exist a constant $\kappa\geq 0$ and some neighborhood $V$ of $\bar x$ such that for all $x\in V$ and  $w \in \partial \varphi(x)$, it holds
		\begin{align}\label{equpperdesc}
			\varphi(y) \leq \varphi(x) + \langle w , y-x\rangle + \kappa \|y -x\|^2,\quad\text{for all } y \in V;
		\end{align}
		\item\label{it:p25-iv}   for each $\bar{x}\in U$, there exists  some neighborhood $V$ of $\bar x$ where $\varphi$ can be expressed as
		$\varphi = g-h$, where $g$ is differentiable with  Lipschitz gradient, and $h$  is  Lipschitz  and  prox-regular. Indeed, one can take $g=\kappa \| \cdot \|^2$, for some $\kappa\geq 0$,
        and $h$ to be convex.

	\end{enumerate}
\end{proposition}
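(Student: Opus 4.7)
The plan is to establish the cycle $(\mathrm{i})\Rightarrow(\mathrm{ii})\Rightarrow(\mathrm{iii})\Rightarrow(\mathrm{iv})\Rightarrow(\mathrm{ii})$, observing that $(\mathrm{ii})\Rightarrow(\mathrm{i})$ is immediate: each inner quadratic $x\mapsto\kappa\|x\|^2-\langle a(c),x\rangle-b(c)$ is $\mathcal{C}^2$ with first and second derivatives $2\kappa x-a(c)$ and $2\kappa I$ that depend jointly continuously on $(x,c)$, matching Definition~\ref{def:upperC2}. For $(\mathrm{i})\Rightarrow(\mathrm{ii})$ I would shrink $V$ so that $\bar V$ is compact and, using continuity of $\nabla^2\varphi_c$ on $\bar V\times C$, choose $\kappa\geq 0$ so that $\nabla^2\varphi_c(x)\preceq 2\kappa I$ uniformly. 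Each $\kappa\|\cdot\|^2-\varphi_c$ is then convex on $V$ and equals the supremum of its affine minorants indexed by $z\in \bar V$; rearranging yields $\varphi_c(x)\leq \kappa\|x\|^2-\langle a(z,c),x\rangle-b(z,c)$ for continuous functions $a(z,c):=2\kappa z-\nabla\varphi_c(z)$ and $b(z,c)$ built from the Taylor remainder, with equality at $z=x$. Minimizing over $c\in C$ and re-indexing by the compact parameter set $C':=\bar V\times C$ produces~\eqref{eq:reprupperC2}.

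For $(\mathrm{ii})\Rightarrow(\mathrm{iii})$, write $\varphi_c^{0}(x):=\kappa\|x\|^2-\langle a(c),x\rangle-b(c)$ and $C(x):=\{c\in C:\varphi_c^{0}(x)=\varphi(x)\}$. A Danskin-type formula for the Clarke subdifferential of a pointwise minimum of $\mathcal{C}^2$ functions gives $\partial\varphi(x)\subseteq\conv\{\nabla\varphi_c^{0}(x):c\in C(x)\}$, so any $w\in\partial\varphi(x)$ admits a representation $w=\sum_{i}\lambda_i(2\kappa x-a(c_i))$ with $c_i\in C(x)$, $\lambda_i\geq 0$ and $\sum_i\lambda_i=1$. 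Combining the global quadratic upper bounds $\varphi(y)\leq\varphi_{c_i}^{0}(y)$ with weights $\lambda_i$ and using the active identity $\varphi(x)=\varphi_{c_i}^{0}(x)$ collapses the convex combination to $\varphi(y)\leq\varphi(x)+\langle w,y-x\rangle+\kappa\|y-x\|^2$, which is~\eqref{equpperdesc}.

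For $(\mathrm{iii})\Rightarrow(\mathrm{iv})$ I would set $h:=\kappa\|\cdot\|^2-\varphi$ on a convex neighborhood $V$ of $\bar x$. The Clarke sum rule (applicable since $\kappa\|\cdot\|^2$ is $\mathcal{C}^1$) gives $\partial h(x)=2\kappa x-\partial\varphi(x)$, and direct algebraic rearrangement of~\eqref{equpperdesc} produces the subgradient-type inequality $h(y)\geq h(x)+\langle v,y-x\rangle$ for every $v\in\partial h(x)$ and every $x,y\in V$. Applying this inequality at $z:=(1-t)x+ty$ against both $x$ and $y$ and taking the convex combination with weights $1-t$ and $t$ yields $h(z)\leq(1-t)h(x)+th(y)$, so $h$ is convex on $V$. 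This delivers the decomposition $\varphi=g-h$ with $g=\kappa\|\cdot\|^2$ of class $\mathcal{C}^{1,+}$ and $h$ convex (hence locally Lipschitz and prox-regular).

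Finally, $(\mathrm{iv})\Rightarrow(\mathrm{ii})$ closes the loop. Take an open convex neighborhood $V'$ with compact closure contained in the neighborhood on which $\varphi=\kappa\|\cdot\|^2-h$, let $L$ be a Lipschitz constant for $h$ on $\bar V'$, and define $C:=\{(z,v):z\in\bar V',\,v\in\partial h(z)\}$. Then $C$ is compact because it is contained in $\bar V'\times B(0,L)$ and is closed by upper semicontinuity of the convex subdifferential, and the maps $a(z,v):=v$ and $b(z,v):=h(z)-\langle v,z\rangle$ are continuous on $C$. Convexity of $h$ gives $h(x)=\max_{(z,v)\in C}\{\langle v,x\rangle+h(z)-\langle v,z\rangle\}$ for $x\in V'$, with the maximum attained at $z=x$ and any $v\in\partial h(x)\subseteq B(0,L)$; substituting in $\varphi=\kappa\|\cdot\|^2-h$ yields~\eqref{eq:reprupperC2}. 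The main obstacles will be the Danskin formula in $(\mathrm{ii})\Rightarrow(\mathrm{iii})$ and the compactness/continuity bookkeeping for the subgradient-graph parametrization in $(\mathrm{iv})\Rightarrow(\mathrm{ii})$; both are standard consequences of Clarke's nonsmooth calculus but require careful setup.
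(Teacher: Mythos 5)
Your architecture is sound and most of the cycle is correct, but it takes a genuinely different route from the paper: the paper dispatches (i)$\iff$(ii) by citing \cite[Theorem~10.33]{MR1491362}, (ii)$\iff$(iii) by citing \cite[Proposition~2.2]{aragonartacho2023boosted}, and the return from (iv) by citing \cite[Propositions~13.33 and~13.34]{MR1491362}, whereas you reprove these from scratch (uniform Hessian bound plus affine minorants for (i)$\Rightarrow$(ii), a Danskin/Clarke formula for the min of $\mathcal{C}^2$ functions for (ii)$\Rightarrow$(iii), and the subgradient inequality $h(y)\geq h(x)+\langle v,y-x\rangle$ forcing convexity of $h:=\kappa\|\cdot\|^2-\varphi$ for (iii)$\Rightarrow$(iv)). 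Those three steps check out: the Hessian bound argument is essentially the proof of R--W Theorem 10.33, the inclusion $\partial\varphi(x)\subseteq\conv\{\nabla\varphi^0_c(x):c\in C(x)\}$ is valid for a pointwise minimum over a compact index set with continuous data (apply Clarke's result for lower-$\mathcal{C}^1$ maxima to $-\varphi$), and the midpoint argument for convexity of $h$ is correct. Your (iii)$\Rightarrow$(iv) is arguably cleaner than the paper's, which instead goes (ii)$\Rightarrow$(iv) by writing $h$ as a max of affine functions.

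There is, however, one genuine gap: you never prove that (iv) \emph{as stated} implies the other conditions. Item (iv) asserts the existence of \emph{some} decomposition $\varphi=g-h$ with $g$ differentiable with Lipschitz gradient and $h$ Lipschitz and prox-regular; the clause ``one can take $g=\kappa\|\cdot\|^2$ and $h$ convex'' is an additional conclusion, not a restriction of the hypothesis. Your closing implication (iv)$\Rightarrow$(ii) only handles the special case $g=\kappa\|\cdot\|^2$ with $h$ convex, so your cycle establishes the equivalence of (i)--(iii) with the \emph{strong} form of (iv), but not that an arbitrary decomposition of the weak form forces $\varphi$ to be upper-$\mathcal{C}^2$. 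The missing content is nontrivial: one needs that $\mathcal{C}^{1,+}$ functions are upper-$\mathcal{C}^2$ (easy: $g=\tfrac{L}{2}\|\cdot\|^2-(\tfrac{L}{2}\|\cdot\|^2-g)$ with the subtrahend convex) and, crucially, that a Lipschitz prox-regular $h$ has $-h$ upper-$\mathcal{C}^2$ --- this is the substance of \cite[Proposition~13.33]{MR1491362} and requires uniformizing the prox-regularity constant over the (bounded) set of subgradients, which your proposal does not address. Either restrict (iv) to the strong form or supply (or cite) this last implication.
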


\begin{proof}
	The equivalence between \ref{it:p25-0} and \ref{it:p25-i} is deduced from \cite[Theorem~10.33]{MR1491362} applied to $-\varphi$.
	To prove the equivalence \ref{it:p25-i}$\iff$\ref{it:p25-iii}, fix some $\bar{x}\in U$ and an open convex neighborhood $V\subset U$ of $\bar{x}$, and apply \cite[Proposition~2.2]{aragonartacho2023boosted} to the set $V$.  Finally, let us notice that \eqref{eq:reprupperC2} allows us to write
	\begin{align*}
		\varphi(x) = g(x)- h(x), \text{ for all } x \in V,
	\end{align*}
	where $g(x):= \kappa \|x\|^2$ and $h(x) := \max_{ c\in C}\{   \langle a(c),x\rangle + b(c)\}$.  This shows the implication \ref{it:p25-i} $\Rightarrow $ \ref{it:p25-iv}.  Finally, we conclude the proof by showing that \mbox{\ref{it:p25-iv} $\Rightarrow$ \ref{it:p25-0}}. Let us suppose that there exists some neighborhood $V$ of $\bar x$ and a representation $\varphi = g- h$, where $g$ is  differentiable with  Lipschitz gradient on $V$  and $h$ is Lipschitz continuous and prox-regular on $V$. It follows from \cite[Proposition 13.34]{MR1491362}  that $g$ is upper-$\mathcal{C}^2$ on $V$, and by \cite[Proposition~13.33]{MR1491362},  the function $-h$ is upper-$\mathcal{C}^2$ on $V$. This concludes the proof, since the class of upper-$\mathcal{C}^2$ functions is preserved under addition.
	\end{proof}

It is worth noting that Proposition~\ref{p:deslemma} characterizes the class of \mbox{upper-$\mathcal{C}^2$} functions as those satisfying a nonsmooth and local version of the   descent lemma, given by~\eqref{equpperdesc}. This property will  be used to prove the convergence of the algorithmic schemes that we propose in Sections~\ref{sect:nlsm} and~\ref{sect:sa}. In addition, Proposition~\ref{p:deslemma} shows that the nonsmooth functions considered in \cite{aragonartacho2023coderivativebased,aragonartacho2023boosted} are upper-$\mathcal{C}^2$ functions. Moreover, inequality~\eqref{equpperdesc} can be interpreted as a local counterpart of \cite[Proposition~2.2]{aragonartacho2023boosted}. Unlike that global result, Proposition~\ref{p:deslemma} permits the constant $\kappa$ to vary with the point $\bar{x} \in U$. Notably, while the algorithm developed  in \cite{aragonartacho2023boosted} requires prior knowledge of $\kappa$, the subgradient methods that we shall introduce below  do not rely on such information. Furthermore, Proposition~\ref{p:deslemma} provides a more precise and localized version of \cite[Lemma~3.6]{aragonartacho2023coderivativebased}, where the descent property was established using a localization of the subgradient.

Let us present some examples that reveal that the class of upper-$\mathcal{C}^2$ functions naturally appears in several contexts in  optimization. While there is no universal criterion to determine whether a function is \mbox{upper-$\mathcal{C}^2$}, the first two examples provide a useful toolkit for this purpose. Specifically, the first example illustrates the preservation of the property under addition, multiplication by scalars, and composition.

 \begin{example}[Operations over upper-$\mathcal{C}^2$ functions]
 	It is easy to prove that the class of upper-$\mathcal{C}^2$  is stable under addition and multiplication by positive scalars (see, e.g., \cite[Exercise 10.35]{MR1491362}).  It is also closed under composition of upper-$\mathcal{C}^2$ and $\mathcal{C}^{1,+}$ functions. More precisely,  given a function $\Phi$ that is $\mathcal{C}^{1,+}$ around~$\bar x$, and an  upper-$\mathcal{C}^2$ function $\varphi   $ around $\Phi(\bar x)$, then the function $ \varphi \circ \Phi$ is upper-$\mathcal{C}^2$ around $\bar x$.  Indeed, it follows from Proposition \ref{p:deslemma} that
 	$\varphi(u)=   \kappa \|u\|^2 -h(u)$ locally around some neighborhood $V$ of $\Phi(\bar x)$. Then,  on some neighborhood of~$\bar x$, one has $\varphi ( \Phi(x)) = \kappa \|\Phi(x)\|^2 -h(\Phi(x)) $, which is the difference of a differentiable function with a locally Lipschitz gradient and a prox-regular function, since $h\circ\Phi$ is prox-regular due to \cite[Proposition~2.3]{MR2069350}.
 	\end{example}
The second example shows that some of the difference of convex functions studied in the literature also belong to the class of upper-$\mathcal{C}^2$ functions.
\begin{example}[Difference of a $\mathcal{C}^{1,+}$ function and a convex function]\label{example01}
First, let us notice that  Proposition~\ref{p:deslemma}\ref{it:p25-iv} tells us that the difference of a $\mathcal{C}^{1,+}$  function and a continuous convex function (actually a prox-regular function) is indeed  upper-$\mathcal{C}^2$. In the particular case where the differentiable function is also convex, this encompasses a class of  difference of convex functions whose minimization has been addressed by several numerical methods in recent years (see, e.g., \cite{ferreira2021boosted,Artacho2019,AragonArtacho2018} and the references therein). Nonetheless,  such works crucially rely on this decomposition to be available, which is not always easy to disclose (see, e.g.,~\cite{MR873269,MR1377085}). Finally, let us emphasize that not every upper-$\mathcal{C}^2$ function is necessarily DC, since its domain need not be convex. For instance, the function $\varphi : \mathbb{R} \setminus \{0\} \to \mathbb{R}$ defined by
	\[
	\varphi(x)=x^2+\frac{1}{x^2}
	\]
	is $\mathcal{C}^{1,+}$  around every point of its domain, but it cannot be decomposed (globally) as the difference of two convex functions.

\end{example}

In our third example we recall that the  Moreau and forward-backward envelopes are upper-$\mathcal{C}^2$ functions.

\begin{example}[Moreau and forward-backward envelopes]\label{ex:FBE}
The  {Moreau envelope} is a   technique used to regularize a potentially nonsmooth or nonconvex function by infimally convolving it with the squared Euclidean norm. Formally, for a given proper lower semicontinuous function $\psi: \mathbb{R}^n \to \Rex$ and a parameter $\lambda > 0$, the \emph{Moreau envelope} $\MoreauYosida{\psi}{\lambda}$ is defined by
\begin{equation}\label{moreau-envelope}
	\MoreauYosida{\psi}{\lambda} (x) = \inf_{z \in \mathbb{R}^n} \left\{ \psi(z) + \frac{1}{2\lambda} \|x - z\|^2 \right\}.
\end{equation}

This envelope plays a central role in mathematical programming and various branches of applied mathematics. Over the past few decades, its properties have been widely exploited in the development of numerical algorithms. In particular, it is well known that if $\psi$ is convex, then the Moreau envelope $\MoreauYosida{\psi}{\lambda}$ is continuously differentiable and has a (globally) Lipschitz continuous gradient. More generally, when $\psi$ is not necessarily convex, it  has been shown (see, e.g., \cite[Example 10.32]{MR1491362} or \cite[Proposition 2.7]{aragonartacho2023coderivativebased})  that    the Moreau envelope is an upper-$\mathcal{C}^2$ function provided that  $\lambda \in (0, \lambda_\psi)$, where
\[
\lambda_\psi := \sup \left\{ \lambda > 0 \;\middle|\; \MoreauYosida{\psi}{\lambda}(x) > -\infty \text{ for some } x \in \mathbb{R}^n \right\}
\]
 is  the so-called \emph{threshold of prox-boundedness} of $\psi$. In more exact terms, for every {$\lambda\in~\hspace{-1mm}(0, \lambda_\psi)$}, the Moreau envelope, given by~\eqref{moreau-envelope}, admits the representation
\begin{equation} \label{more-asp}
	\MoreauYosida{\psi}{\lambda}(x) = \frac{1}{2\lambda} \|x\|^2 - \Asp{\lambda}{\psi}(x), \quad x \in \mathbb{R}^n,
\end{equation}
where the $\Asp{\lambda}{\psi}(x)$  is the \emph{Asplund function}, which is the convex function defined as
\begin{equation} \label{asp}
	\Asp{\lambda}{\psi}(x) := \sup_{z \in \mathbb{R}^n} \left\{ \frac{1}{\lambda} \langle z, x \rangle - \psi(z) - \frac{1}{2\lambda} \|z\|^2 \right\}.
\end{equation}

Now, let us consider the structured nonlinear program given by
	\begin{equation}\label{ProFBE}
		\min_{x\in \mathbb{R}^n} \varphi(x) := f(x) + \psi(x),
	\end{equation}
	where $f\colon \mathbb{R}^n \to \mathbb{R}$ is a $\mathcal{C}^{2,1}$ function (twice continuously differentiable with a globally Lipschitz Hessian) with an $L_f$-Lipschitz continuous gradient, and $\psi\colon \mathbb{R}^n \to \Rex$ is a proper, extended-real-valued prox-bounded function with threshold $\lambda_\psi > 0$. Patrinos and Bemporad introduced in~\cite{Patrinos2013}  a new class of ad-hoc regularization techniques for handling problem~\eqref{ProFBE}, which is now commonly called the forward-backward envelope (see, e.g.,  \cite{MR3845278} and the references therein). Precisely, the \emph{forward-backward envelope} of the function $\varphi$ with parameter $\lambda >0$ is defined as
	\begin{align}\label{FBE}
		\varphi_\lambda(x) :=\inf_{z \in \mathbb{R}^n}\Big\{ f(x) + \langle \nabla f(x),z-x\rangle + \psi(z) +\frac{1}{2\lambda }\| z- x\|^2\Big\}.
	\end{align}
The forward-backward envelope is clearly a generalization of the Moreau envelope, since $\varphi_\lambda=\MoreauYosida{\psi}{\lambda}$ when $f=0$.
Further, it was shown in \cite[Theorem~5.5]{aragonartacho2023coderivativebased} that $\varphi_\lambda=g-h$,
where $g(x):= f(x) +\frac{1}{2\lambda} \|x\|^2 $ is of class $\mathcal{C}^{2,1}$, and
$h(x):= \langle \nabla f(x),x\rangle + \Asp{\lambda}{\psi}(x- \lambda \nabla f(x))$ is locally Lipschitzian and prox-regular on $\R^n$.
This proves that the forward-backward envelope $\varphi_\lambda$ is also upper-$\mathcal{C}^2$.	
	\end{example}

Finally, let us notice  also that a general augmented Lagrangian function belongs to the class of upper-$\mathcal{C}^2$ functions.

\begin{example}[Augmented Lagrangian method]
The Augmented Lagrangian Method(abbr. ALM) is a powerful framework for solving composite and constrained optimization problems, particularly those involving nonlinear or nonconvex constraints. Originally introduced independently by Hestenes \cite{Hestenes1969} and Powell \cite{Powell1969} in the context of nonlinear programming with equality constraints, ALM was later extended by Rockafellar \cite{Rockafellar1973} to  convex programming problems with inequality constraints. This extension significantly broadened the method’s applicability and laid the groundwork for modern augmented Lagrangian techniques. Over the past decades, ALM has become a widely used method in nonlinear optimization, with various adaptations developed to accommodate specific problem structures.

To illustrate the method, let us consider the general optimization problem

 \begin{equation}\label{eq:P_ALM}
 	 \min_{x\in \mathbb{R}^n} \varphi(x):= f (x) + \psi(G(x)),
 \end{equation}
 where $f \colon   \mathbb{R}^n \to \R$ and  $G\colon \mathbb{R}^n \to \R^m$ are $\mathcal{C}^2$, and $\psi\colon \mathbb{R}^m \to \Rex$ is a prox-bounded function with  threshold $\lambda_\psi$. Following \cite{HangSarabi2025,MR4331979},   the augmented Lagrangian function $\mathcal{L} \colon \mathbb{R}^n \times \mathbb{R}^m \times (0,+\infty) \to \Rex$  of problem~\eqref{eq:P_ALM} is given by
 \begin{align*}
	\mathcal{L} (x,y, \lambda) :=\inf_{ u \in \mathbb{R}^m}\left\{ f(x) +  	\psi( G(x) + u) + \frac{1}{2\lambda }\| u\|^2 - \langle y, u\rangle 	\right\}.
\end{align*}
 By iteratively updating the primal and dual  variables, the ALM balances the enforcement of feasibility and optimality.  Namely, given a point $(x_k,y_k, \lambda_k)$, it generates variables $(x_{k+1},y_{k+1})$ by the rule
 \begin{align*}
 	x_{k+1} &\in \argmin_{ x\in \mathbb{R}^n} 	\mathcal{L} (x,y_k, \lambda_k),\\
    y_{k+1} &\in y_k + \lambda_k \partial_{y} 	\mathcal{L} (x_{k+1},y_k, \lambda_k).
 \end{align*}
 
 In order to show that the  augmented Lagrangian function is upper-$\mathcal{C}^2$, let us derive an alternative representation in terms of the Moreau envelope of $\psi$. By completing the squares on the terms $ \frac{1}{2\lambda }\| u\|^2 - \langle y, u\rangle $ and performing the change of variables $v:=G(x)+u$, we get
\[
\begin{aligned}
\mathcal{L} (x,y, \lambda) & =\inf_{ u \in \mathbb{R}^m}\left\{ f(x) +  	\psi( G(x) + u) + \frac{1}{2\lambda }\| u - \lambda y \|^2 -  \frac{\lambda}{2} \|y\|^2 	\right\} \\
& = f(x) + \inf_{u \in \mathbb{R}^m}\left\{ \psi (G(x)+u) + \frac{1}{2\lambda }\| u - \lambda y \|^2 \right\} - \frac{\lambda}{2} \|y\|^2 \\
& = f(x) + \inf_{v \in \mathbb{R}^m} \left\{ \psi (v) + \frac{1}{2\lambda }\| v  - (G(x) + \lambda y) \|^2 \right\} -  \frac{\lambda}{2} \|y\|^2 \\
& = f(x) +  \MoreauYosida{\psi}{\lambda} (G(x) + \lambda y) - \frac{\lambda}{2} \|y\|^2.
\end{aligned}
\]
Therefore, by the preceding examples, the  augmented Lagrangian function is indeed an  upper-$\mathcal{C}^2$ function.
\end{example}

\section{A nonmonotone subgradient method for nonconvex minimization}\label{sect:nlsm}

Let us first briefly draw our attention to the  optimality conditions for~\eqref{eq:P}. If $\bar{x}\in\R^n$ is a local minimum, it follows by Fermat's rule  (see, e.g., \cite[Proposition~1.30]{MR3823783}) that
\begin{equation}\label{eq:st-Clarke}
	0\in\partial\varphi(\bar{x}).
\end{equation}
We say that a point $\bar{x}$ satisfying~\eqref{eq:st-Clarke} is a \emph{stationary point} for problem~\eqref{eq:P}.

We describe the pseudocode of our nonmonotone line search procedure for the minimization of upper-$\mathcal{C}^2$ functions in Algorithm~\ref{alg:1}. It follows the structure of classical line search methods for minimizing smooth functions, where the gradient is now replaced by a subgradient $w_k\in\partial \varphi (x_k)$. Once an appropriate search direction $d_k\in\R^n$ is selected, it performs an Armijo-type line search~\cite{MR191071}  that can be conducted in a nonmonotone fashion.
Specifically, at iteration $k$ it searches for an update that decreases the value of $\varphi$ with respect to the maximum function value attained among the last $\min\{\mM_k,k\}+1$ iterations, where $\mM_k\geq 0$ is a \emph{memory parameter} that is allowed to vary at each iteration.

\begin{algorithm}[ht!]
	\begin{algorithmic}[1]
		\Require{$x_0 \in \R^n$; $\tau_{\min} >0$; $\sigma,\beta\in{]0,1[}$.}
		\For{$k=0,1,\ldots$}
		\State Take $w_k \in  \partial \varphi(x_k)$; \textbf{if} $w_k=0$ \textbf{then} STOP and return $x_k$.\label{step:wk}
		\State Choose any $d_k \in\mathbb{R}^n\backslash\{0\}$ such that $\langle w_k,d_k\rangle<0$.  \label{step:dk}
		\State Take  any $\overline{\tau}_k \geq \tau_{\min}$. Set the stepsize $\tau_k:=\overline{\tau}_k$ and the memory parameter $\mM_{k}$. \label{step:mk}
		\While{$\varphi(x_k + \tau_k d_k) > \max\limits_{  {[k-\mM_k]^+\leq i \leq k}       }   \varphi(x_i)   +  \sigma \tau_k \langle w_k, d_k\rangle$}\label{step:while}
		\State $\tau_k = \beta \tau_k$;
		\EndWhile
		\State	  Set $x_{k+1}:=x_k + \tau_k d_k$.\label{step:update}
		\EndFor
	\end{algorithmic}
	\caption{Nonmonotone Subgradient Method}\label{alg:1}
\end{algorithm}

In Assumption~\ref{a:Bk_bounded} below we specify some conditions that will be required for the convergence analysis of our scheme. We note that these are standard assumptions for Newton and quasi-Newton methods. Particularly,  they already appear in the seminal work\cite{MR849278}, where such a nonmonotone line search was introduced.

\begin{assumption}\label{a:Bk_bounded}
	The following conditions shall be assumed:
	\begin{enumerate}[label=(\alph*)]
		\item\label{ass:a}  There exists a constant $\Csta>0$ such that
		\begin{align}\label{assumpt01}
			\langle w_k,d_k\rangle \leq -\Csta \|d_k\|^2,\quad \text{ for all } k \in \N.
		\end{align}
		\item\label{ass:b} 	For any bounded  subsequence $(x_{k_j})_{j\in\N}$ generated by Algorithm~\ref{alg:1}, there is a constant  $\Cstb>0$  such that
		\begin{equation}\label{assumpt02}
			\| w_{k_j}\| \leq \Cstb \| d_{k_j}\|,   \quad \text{ for all } j \in \N.
		\end{equation}
		\item\label{ass:c} There exists  $\mM \geq 0$ such that the sequence of memory parameters satisfies that
		\begin{align}\label{assumpt03}
			0\leq \mM_{k+1} \leq \min\{ \mM_{k }+1, \mM \}, \quad \text{for  all }k \in \mathbb{N}.
		\end{align}
	\end{enumerate}
	
\end{assumption}

\begin{remark}[Ensuring Assumption~\ref{a:Bk_bounded}]\label{rem:eig}
	In many practical applications, the directions $d_k$ used in Algorithm \ref{alg:1} are computed as the solution to a linear system of the form
	\[
	-w_k = B_k d_k,
	\]
	where $B_k$ is a positive definite matrix. In this case, we highlight the interpretation of assumptions \eqref{assumpt01} and \eqref{assumpt02} in terms of the spectral
properties of~$B_k$:

	\begin{itemize}
		\item Assumption \eqref{assumpt01} amounts to requiring that there exists a positive constant $\lambda_{\texttt{m}}$ such that
		\[
		\lambda_{\texttt{m}} \leq \lambda_{\min}(B_k) , \quad \text{for all } k \in \mathbb{N}.
		\]
		
		\item Assumption \eqref{assumpt02}, on the other hand, is satisfied whenever for any bounded subsequence $(x_{k_j})_{j \in \mathbb{N}}$ generated by Algorithm~\ref{alg:1}, there exists a positive constant $\lambda_{\texttt{M}}$ such that
		\[
		 \lambda_{\max}(B_{k_j}) \leq  \lambda_{\texttt{M}}, \quad \text{for all } j \in \mathbb{N}.
		\]
	\end{itemize}
	
Lastly, condition \eqref{assumpt03} can be easily guaranteed by keeping a constant memory parameter. In Algorithm~\ref{alg:SNSM} we provide a different self-adaptive possibility.
\end{remark}

The \textbf{while} loop in step~\ref{step:while} of Algorithm~\ref{alg:1} performs a nonmonotone Armijo-like line search.
When the function $\varphi$ is upper-$\mathcal{C}^2$, such a line search is well-defined and ensures finite termination of the \textbf{while} loop. This is a direct consequence of the following result.

\begin{proposition}\label{p:dm}
	Let $\varphi:\R^n\to\R$ be upper-$\mathcal{C}^2$ on  $\R^n$.
	Given any $\sigma\in{]0,1[}$ and $x,w,d\in\R^n$ such that $w\in\partial\varphi(x)$ and $\langle w,d\rangle<0$, there exists $\eta>0$ such that
	\begin{align*}
		\varphi(x + \tau d) &\leq   \varphi(x)  +    \sigma \tau  \langle w,d\rangle, \quad\text{for all } \tau \in{]0,\eta[}.
	\end{align*}
\end{proposition}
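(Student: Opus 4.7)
The plan is to invoke the local descent lemma provided by Proposition~\ref{p:deslemma}\ref{it:p25-iii} at the point $x$, and then choose $\tau$ small enough so that the quadratic remainder is dominated by the linear term $(1-\sigma)\tau(-\langle w,d\rangle)$, which is strictly positive thanks to the hypothesis $\langle w,d\rangle<0$.

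More concretely, I would first apply Proposition~\ref{p:deslemma}\ref{it:p25-iii} with $\bar{x}=x$ to obtain a constant $\kappa\geq 0$ and a neighborhood $V$ of $x$ such that, since $w\in\partial\varphi(x)$, one has
\[
\varphi(y)\leq \varphi(x)+\langle w,y-x\rangle+\kappa\|y-x\|^2,\quad\text{for all }y\in V.
\]
Since $V$ is open, there exists $\eta_1>0$ such that $x+\tau d\in V$ for every $\tau\in{]0,\eta_1[}$. Substituting $y=x+\tau d$ into the displayed inequality, I would get, for every $\tau\in{]0,\eta_1[}$,
\[
\varphi(x+\tau d)\leq \varphi(x)+\tau\langle w,d\rangle+\kappa\tau^2\|d\|^2.
\]

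Next, to match the Armijo-type right-hand side $\varphi(x)+\sigma\tau\langle w,d\rangle$, I would rewrite the last inequality as
\[
\varphi(x+\tau d)\leq \varphi(x)+\sigma\tau\langle w,d\rangle+\tau\bigl[(1-\sigma)\langle w,d\rangle+\kappa\tau\|d\|^2\bigr],
\]
so it suffices to ensure that the bracketed quantity is nonpositive. Since $\langle w,d\rangle<0$ and $1-\sigma>0$, if $\kappa=0$ or $d=0$ this is automatic; otherwise it holds provided
\[
\tau\leq \eta_2:=\frac{(1-\sigma)(-\langle w,d\rangle)}{\kappa\|d\|^2}>0.
\]
Setting $\eta:=\min\{\eta_1,\eta_2\}>0$ (with the convention $\eta=\eta_1$ when $\kappa\|d\|^2=0$) yields the conclusion.

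There is no real obstacle: the only subtlety is that Proposition~\ref{p:deslemma}\ref{it:p25-iii} is only local, so the constant $\eta_1$ ensuring $x+\tau d\in V$ must be introduced explicitly, and the case $\kappa=0$ should be mentioned in passing to justify that $\eta_2$ is well-defined whenever it is needed. Beyond that, the argument is a direct mimic of the classical smooth Armijo well-definedness proof, with the local upper-$\mathcal{C}^2$ descent lemma substituted for the standard quadratic upper bound.
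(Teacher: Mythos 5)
Your proof is correct and rests on exactly the same key ingredient as the paper's, namely the local descent inequality of Proposition~\ref{p:deslemma}\ref{it:p25-iii} applied at $x$ with the subgradient $w$. The only (cosmetic) difference is that the paper packages the ``$\tau$ small enough'' step by first bounding the upper Dini directional derivative $d^{+}\varphi(x;d)\leq\langle w,d\rangle<\sigma\langle w,d\rangle$ and then invoking the definition of the limsup, whereas you substitute $y=x+\tau d$ directly and exhibit an explicit threshold $\eta$; both amount to the same estimate.
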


\begin{proof}
	Proposition~\ref{p:deslemma}\ref{it:p25-iii} yields the following estimate for the upper Dini directional derivative
	\begin{align*}
		d^{+}\varphi(x;d) &= \limsup_{t\to 0^+}  \frac{ \varphi(x+td)-\varphi(x)}{t}  \\
		& \leq  \limsup_{t\to 0^+} \frac{t\langle w,d \rangle+   \kappa t^2  \|d\|^2}{t} \\
		& = \langle w,d \rangle,
	\end{align*}
	where  $\kappa$ is some nonnegative constant.  Then, we have
	\begin{equation}\label{eq:bound_inner}
		d^{+} \varphi(x;d) \leq \langle w,d\rangle<\sigma\langle w,d\rangle<0,
	\end{equation}
	so there exists $\eta>0$ such that, for all $\tau\in{]0,\eta[}$,
	\begin{equation*}
		\begin{aligned}
			\varphi(x+\tau d) &\leq \varphi(x) + \sigma\tau \langle w,d\rangle,
		\end{aligned}
	\end{equation*}
	which completes the proof.
\end{proof}

Our next proposition proves the convergence of a specific subsequence of function values generated by Algorithm~\ref{alg:1}. The following notation shall be used. For all $k\in\N$, we use  $\ell(k)$ to denote the smallest integer such that
\begin{align}
	\ell(k) \in  \argmax\left\{ \varphi(x_i)  : {[k-\mM_k]^+\leq i \leq k} \right\}.
\end{align}

\begin{proposition} \label{prop01}
	Let $\varphi:\R^n\to\R$ be  upper-$\mathcal{C}^2$ on $\R^n$ with $\inf_{x\in\R^n}\varphi(x)>-\infty$ and suppose that Assumptions~\ref{a:Bk_bounded}\ref{ass:a} and~\ref{a:Bk_bounded}\ref{ass:c} hold. Given any initial point $x_0\in\R^n$, either Algorithm~\ref{alg:1}  stops at some stationary point after a finite number of iterations, or it produces a sequence $(x_{k})_{k\in\N}$ such that
	\begin{equation}\label{eq:suff_dec}
		\varphi(x_{k+1})   \leq    \varphi(x_{\ell(k)})  - \frac{\sigma \Csta }{ \tau_k  } \|x_{k+1 } - x_k\|^2, \quad \text{for all } k \geq 0.
	\end{equation}
	Particularly, the sequence $\bigl(\varphi(x_{\ell(k)})\bigr)_{k\in\N}$ monotonically decreases and converges. Furthermore, if $\sup_{k\in\N}\tau_k<+\infty$, there exists a constant $c >0$ such that
	\begin{align}\label{sum_diff}
		\min_{ 0 \leq j \leq k} \| x_{j+1} - x_{j}\| \leq  \frac{ c }{ \sqrt{k +1}}, \quad \text{for all } k \in \N.
	\end{align}
\end{proposition}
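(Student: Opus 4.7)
The plan is first to establish the per-iteration descent inequality~\eqref{eq:suff_dec}, then to deduce the monotonicity and convergence of $(\varphi(x_{\ell(k)}))_{k\in\N}$, and finally to extract the complexity bound~\eqref{sum_diff} via a subsequence telescoping argument.

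To establish~\eqref{eq:suff_dec}, I would first note that the \textbf{while} loop at step~\ref{step:while} terminates in finitely many inner iterations: Proposition~\ref{p:dm} yields some $\eta_k>0$ with $\varphi(x_k+\tau d_k)\leq \varphi(x_k)+\sigma\tau\langle w_k,d_k\rangle$ for all $\tau\in{]0,\eta_k[}$, and a fortiori this right-hand side is bounded above by $\varphi(x_{\ell(k)})+\sigma\tau\langle w_k,d_k\rangle$, so the exit test is satisfied as soon as $\overline{\tau}_k\beta^j<\eta_k$. Combining the exit condition with Assumption~\ref{a:Bk_bounded}\ref{ass:a} and the identity $\|d_k\|^2=\tau_k^{-2}\|x_{k+1}-x_k\|^2$ then yields
\begin{equation*}
\varphi(x_{k+1})\leq \varphi(x_{\ell(k)})+\sigma\tau_k\langle w_k,d_k\rangle \leq \varphi(x_{\ell(k)})-\sigma \Csta \tau_k\|d_k\|^2 = \varphi(x_{\ell(k)})-\frac{\sigma \Csta}{\tau_k}\|x_{k+1}-x_k\|^2,
\end{equation*}
which is precisely~\eqref{eq:suff_dec}.

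For the monotonicity of $V_k:=\varphi(x_{\ell(k)})$, I would exploit Assumption~\ref{a:Bk_bounded}\ref{ass:c}: the bound $\mM_{k+1}\leq \mM_k+1$ gives the inclusion $\{[k+1-\mM_{k+1}]^+,\ldots,k+1\}\subseteq\{[k-\mM_k]^+,\ldots,k\}\cup\{k+1\}$, so $V_{k+1}\leq\max\{V_k,\varphi(x_{k+1})\}$. Since \eqref{eq:suff_dec} forces $\varphi(x_{k+1})\leq V_k$, this delivers $V_{k+1}\leq V_k$; combined with the lower bound $\inf_{x}\varphi(x)>-\infty$, the claimed convergence follows.

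The bound~\eqref{sum_diff} is the main obstacle and calls for the classical Grippo--Lampariello--Lucidi trick. Let $m:=\mM+1$ and, for $j\geq 1$, set $\lambda_j:=\ell(jm)$. Assumption~\ref{a:Bk_bounded}\ref{ass:c} gives $\lambda_j\geq jm-\mM=(j-1)m+1$, hence $\lambda_j-1\geq(j-1)m$, and the monotonicity of $(V_k)$ yields $V_{\lambda_j-1}\leq V_{(j-1)m}$. Applying \eqref{eq:suff_dec} at iteration $\lambda_j-1$ with $T:=\sup_k\tau_k<+\infty$ gives
\begin{equation*}
V_{jm}=\varphi(x_{\lambda_j})\leq V_{\lambda_j-1}-\frac{\sigma \Csta}{T}\|x_{\lambda_j}-x_{\lambda_j-1}\|^2\leq V_{(j-1)m}-\frac{\sigma \Csta}{T}\|x_{\lambda_j}-x_{\lambda_j-1}\|^2.
\end{equation*}
Telescoping over $j=1,\ldots,N$ and using $V_{Nm}\geq \inf\varphi$ bounds $\sum_{j=1}^{N}\|x_{\lambda_j}-x_{\lambda_j-1}\|^2$ by the $N$-independent constant $T(V_0-\inf\varphi)/(\sigma \Csta)$. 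Since the indices $\lambda_j-1$ all lie in $\{0,\ldots,Nm-1\}$, a pigeonhole inequality yields $\min_{0\leq i\leq Nm-1}\|x_{i+1}-x_i\|^2\leq N^{-1} T(V_0-\inf\varphi)/(\sigma \Csta)$, and taking $N=\lfloor k/m\rfloor$ together with absorbing constants delivers~\eqref{sum_diff}. The delicate point here is precisely this blockwise stepping: it is what enables telescoping despite the nonmonotonicity of $(\varphi(x_k))_{k\in\N}$ itself.
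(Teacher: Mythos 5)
Your proposal is correct and follows essentially the same route as the paper: the exit condition of the linesearch combined with Assumption~\ref{a:Bk_bounded}\ref{ass:a} gives \eqref{eq:suff_dec}, Assumption~\ref{a:Bk_bounded}\ref{ass:c} gives monotonicity of $\varphi(x_{\ell(k)})$, and the blockwise telescoping along $\lambda_j=\ell(j(\mM+1))$ with the averaging step is exactly the paper's argument with $z(t)=\ell(t(\mM+1))$ under different notation. The only cosmetic differences are that you spell out the finite termination of the \textbf{while} loop and the index-set inclusion behind the monotonicity, both of which the paper leaves implicit.
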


\begin{proof}
	If Algorithm~\ref{alg:1} stops after a finite number of iterations due to the stopping criterion in step~\ref{step:wk}, then it clearly returns a stationary point of~\eqref{eq:P}. Otherwise, it generates an infinite sequence $(x_k)_{k\in\N}$. By step~\ref{step:while} of Algorithm~\ref{alg:1} and \eqref{assumpt01}, we have that for all $k \geq 0$,
	\begin{align*}
		\inf_{ x\in  \R^n}\varphi (x)\leq  \varphi(x_{k+1})    &\leq  \max\limits_{{[k-\mM_k]^+\leq i \leq k}}  \varphi(x_i) - \sigma \Csta \tau_k \|d_k\|^2 \\&=   \varphi(x_{\ell(k)})  -  \sigma \Csta \tau_k \|d_k\|^2 \\
		&\leq   \varphi(x_{\ell(k)})  -  \frac{ \sigma \Csta }{ \tau_k }\|x_{k+1}-x_k\|^2.
	\end{align*}
	It follows that \eqref{eq:suff_dec} holds.  Hence, using   \eqref{assumpt03}, we have that
    \begin{equation}\label{eq:monot}
    \varphi(x_{\ell(k+1)})\leq \varphi(x_{\ell(k)}),\quad\text{for all } k\geq0.
    \end{equation}
    Since $\bigl(\varphi(x_{\ell(k)})\bigr)_{k\in\N}$ is bounded from below, then it converges.

	Now, define $z(t):= \ell( t(\mM+1))$,  for $t \geq 0$. Since $\ell(t(\mM+1))\geq t(\mM+1)-\mM$, for $t\geq 1$, it holds that $z(t)-1\geq (t-1)(\mM+1)$, so we deduce by~\eqref{eq:monot} that
    $$\varphi(x_{\ell(z(t)-1)})\leq \varphi(x_{\ell((t-1)(\mM+1))})=
    \varphi(x_{z(t-1)}),\quad\text{for all }t\geq 1.$$
    Hence, by setting  $k:=z(t)-1$ for $t\geq 1$ in \eqref{eq:suff_dec},  we get that
	\begin{align*}
		\varphi(x_{ z(t)}) & \leq  \varphi(x_{z(t-1)}) -\frac{\sigma \Csta }{ \tau_{z(t)-1}  } \|x_{z(t)} - x_{ z(t)-1}\|^2.
	\end{align*}
	If we let $\tau_{\max}:=\sup_{k\in\N}\tau_k$, we get for any integer $s\geq 1$ that
	\begin{align}\label{ineqsum}
		\begin{aligned}
			\sum_{t=1}^{s}  \|x_{z(t) } - x_{ z(t)-1}\|^2 &\leq   \frac{  \tau_{\max}   }{   \sigma \Csta }    \sum_{t=1}^{s} \left(  \varphi(x_{ z(t-1)  }) - \varphi(x_{z(t)})  \right) \\&=
			\frac{  \tau_{\max}   }{   \sigma \Csta }  \left(  \varphi( x_0) - \varphi(x_{ z( s) })   \right).
		\end{aligned}
	\end{align}
    Finally, let  $k \geq \mM$. By taking $s:= \bigl\lfloor \frac{k+1}{\mM+1} \bigr\rfloor$ in \eqref{ineqsum} (i.e., $s$ is the greatest integer less than or equal to $\frac{k+1}{\mM+1}$), and noticing that $z(s) \leq k+1$, we get that
	\begin{align*}
		\min_{ 0 \leq j \leq k} \| x_{j+1} - x_{j}\|^2 &\leq \frac{ 1}{ s}   	\sum_{t=1}^{s}  \|x_{z(t) } - x_{ z(t)-1}\|^2  \\
		& \leq \frac{\tau_{\max}}{ {\big\lfloor \frac{k+1}{\mM+1}\big\rfloor}\sigma \Csta }   \left(  \varphi( x_0) - \inf_{k\in \N}\varphi(x_{ \ell(k) })   \right),
	\end{align*}
	which yields \eqref{sum_diff}.	
\end{proof}

We are now ready to present the main result of this paper. The following theorem establishes subsequential convergence of Algorithm~\ref{alg:1} to a stationary point of~\eqref{eq:P}.

\begin{theorem}\label{t:conv}
	Let $\varphi:\R^n\to\R$ be upper-$\mathcal{C}^2$ on $\R^n$ with $\inf_{x\in\R^n}\varphi(x)>-\infty$ and suppose that Assumption~\ref{a:Bk_bounded} holds.  Given any initial point $x_0\in\R^n$, either Algorithm~\ref{alg:1}  stops at a stationary point after a finite number of iterations, or it produces a sequence $(x_{k})_{k\in\N}$ such that, if $\sup_{k\in\N}\tau_k<+\infty$, the following assertions hold:

	\begin{enumerate}[label=(\roman*)]
		\item\label{it:conv-1} If $(x_{k_j})_{j\in\N}$ is a  bounded subsequence of $(x_k)_{k \in  \N}$, then  $\inf_{ j \in \N} \tau_{k_j} >0$. Moreover, $\lim\limits_{j \to \infty } \|x_{k_{j}+1}-x_{k_j}\|\hspace{-.4mm}=\hspace{-.4mm}0$, $\lim\limits_{j \to \infty } \|w_{k_j}\|\hspace{-.4mm}=\hspace{-.4mm}0$, $\lim\limits_{j \to \infty } \| d_{k_j}\|\hspace{-.4mm}=\hspace{-.4mm}0$ and   $\lim\limits_{j \to \infty} \varphi(x_{k_j})\hspace{-.4mm}=\hspace{-.4mm}\lim\limits_{k \to \infty} \varphi(x_{\ell( k)})$.

		\item\label{it:conv-2}  If the entire sequence $(x_k)_{k\in\N}$ is bounded, then  its set of accumulation points is nonempty, closed and connected. Moreover, in such a case, there exists $\Cstc>0$ such that
		\begin{align}\label{rates}
			\min_{ 0 \leq j \leq k} \left\{ \| x_{j+1} - x_{j}\|  + \| w_j\| + \| d_j\|\right\} \leq  \frac{ \Cstc}{ \sqrt{ k+1}}, \  \text{for all } k \geq 0.
		\end{align}
		
		\item\label{it:conv-3} Any accumulation point of $(x_k)_{k \in  \N}$ is a stationary point of~\eqref{eq:P}.
		
		\item\label{it:conv-4} If $(x_k)_{k\in\N}$ is bounded, and it has an isolated accumulation point, then the entire sequence converges to a stationary point of~\eqref{eq:P}.
	\end{enumerate}
	
\end{theorem}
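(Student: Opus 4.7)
The plan is to build on Proposition~\ref{prop01}, which already provides the key inequality~\eqref{eq:suff_dec} and the convergence of $\bigl(\varphi(x_{\ell(k)})\bigr)_{k\in\N}$ to some $V^\ast$. I will lift these facts to the behaviour of $(x_k)$ along bounded subsequences, from which (i)--(iv) follow essentially in sequence. A recurring device is the following compactness observation: whenever a subsequence $(x_{k_j})$ is bounded, local Lipschitzness of the upper-$\mathcal{C}^2$ function $\varphi$ makes $\partial\varphi$ locally bounded, hence $(w_{k_j})$ is bounded, and together with Assumption~\ref{a:Bk_bounded}\ref{ass:a} one gets $\Csta\|d_{k_j}\|^2\leq -\langle w_{k_j},d_{k_j}\rangle\leq\|w_{k_j}\|\|d_{k_j}\|$, so $(d_{k_j})$ is also bounded. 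Since $\tau_{k_j}\leq\tau_{\max}$, the trial points $x_{k_j}+(\tau_{k_j}/\beta)d_{k_j}$ stay in a compact set, on a neighborhood of which Proposition~\ref{p:deslemma}\ref{it:p25-iii} (applied with a finite cover) yields a uniform constant $\kappa>0$ for the descent inequality~\eqref{equpperdesc}.

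For (i), the lower bound $\inf_j\tau_{k_j}>0$ is established as in the classical linesearch analysis: if $\tau_{k_j}<\overline{\tau}_{k_j}$, then $\tau_{k_j}/\beta$ was rejected, so $\varphi(x_{k_j}+(\tau_{k_j}/\beta)d_{k_j})>\varphi(x_{k_j})+\sigma(\tau_{k_j}/\beta)\langle w_{k_j},d_{k_j}\rangle$; combining with the uniform descent inequality and Assumption~\ref{a:Bk_bounded}\ref{ass:a} gives $\tau_{k_j}\geq \beta(1-\sigma)\Csta/\kappa$, so $\tau_{k_j}\geq\min\{\tau_{\min},\beta(1-\sigma)\Csta/\kappa\}$. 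The main obstacle then is to deduce $\|x_{k_j+1}-x_{k_j}\|\to 0$, since \eqref{eq:suff_dec} compares $\varphi(x_{k_j+1})$ with $\varphi(x_{\ell(k_j)})$, whereas we only know a priori that the latter converges. I will overcome this via the Grippo--Lampariello--Lucidi-type induction underlying~\cite{MR849278}: setting $\hat\ell(k):=\ell(k)-1$ one uses~\eqref{eq:suff_dec} together with $\sup_k\tau_k<+\infty$ and $\inf_j\tau_{k_j}>0$ to show, by induction on $t\in\{0,1,\ldots,\mM\}$, that along any bounded subsequence $\varphi(x_{\ell(k_j)-t})\to V^\ast$ and $\|x_{\ell(k_j)-t}-x_{\ell(k_j)-t-1}\|\to 0$, exploiting the memory bound~\eqref{assumpt03} to control the indices. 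Propagating this through the whole memory window and using $k_j\in[\ell(k_j+\mM)-\mM,\ell(k_j+\mM)]$ gives $\varphi(x_{k_j+1})\to V^\ast$, whence~\eqref{eq:suff_dec} forces $\|x_{k_j+1}-x_{k_j}\|\to 0$. Finally, $\|d_{k_j}\|=\|x_{k_j+1}-x_{k_j}\|/\tau_{k_j}\to 0$, and then $\|w_{k_j}\|\leq \Cstb\|d_{k_j}\|\to 0$ by Assumption~\ref{a:Bk_bounded}\ref{ass:b}.

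For (ii), the assumed global boundedness of $(x_k)$ turns the compactness step above into a global statement, so $\inf_{k\in\N}\tau_k>0$. Combining this with~\eqref{sum_diff} of Proposition~\ref{prop01} yields the $\mathcal{O}(1/\sqrt{k+1})$ rate for $\min_{j\leq k}\|x_{j+1}-x_j\|$; dividing by the uniform lower bound on $\tau_j$ transfers the rate to $\|d_j\|$, and Assumption~\ref{a:Bk_bounded}\ref{ass:b} transfers it in turn to $\|w_j\|$, giving~\eqref{rates}. The set of accumulation points is nonempty and closed by boundedness, and its connectedness follows from a classical Ostrowski-type argument using $\|x_{k+1}-x_k\|\to 0$ from part~(i).

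For (iii), let $x^\ast$ be an accumulation point and $x_{k_j}\to x^\ast$. Since $(x_{k_j})$ is bounded, part~(i) gives $\|w_{k_j}\|\to 0$ with $w_{k_j}\in\partial\varphi(x_{k_j})$; outer semicontinuity of the Clarke subdifferential of the locally Lipschitz function $\varphi$ then yields $0\in\partial\varphi(x^\ast)$, i.e.,~\eqref{eq:st-Clarke}. For (iv), a standard topological argument suffices: if $x^\ast$ is an isolated accumulation point of the bounded sequence $(x_k)$, pick open balls $B_r(x^\ast)\subset \overline{B_r(x^\ast)}\subset B_R(x^\ast)$ with no other accumulation point in $\overline{B_R(x^\ast)}$. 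If $(x_k)$ did not converge to $x^\ast$, infinitely many pairs of consecutive iterates would straddle the annulus $\overline{B_R(x^\ast)}\setminus B_r(x^\ast)$; using $\|x_{k+1}-x_k\|\to 0$ from part~(i), one extracts a convergent subsequence in the compact annulus, producing an accumulation point different from $x^\ast$ inside $\overline{B_R(x^\ast)}$, a contradiction.
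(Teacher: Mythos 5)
Your proposal is correct and follows essentially the same route as the paper: Proposition~\ref{prop01} for the sufficient decrease, a lower bound on the accepted stepsizes obtained from the local descent inequality applied to the last rejected trial step, the Grippo--Lampariello--Lucidi induction through the memory window to propagate $\varphi(x_{\ell(k)})\to V^\ast$ down to $\varphi(x_{k_j+1})$ and hence $\|x_{k_j+1}-x_{k_j}\|\to 0$, followed by closedness of the Clarke subdifferential and an Ostrowski-type argument for (ii)--(iv). The one point to state carefully is the uniform constant $\kappa$: Proposition~\ref{p:deslemma}\ref{it:p25-iii} requires $x_{k_j}$ and the rejected trial point to lie in the \emph{same} neighborhood, which your finite-cover argument delivers only after restricting to indices with small $\tau_{k_j}$ (where the increment is small because $(d_{k_j})$ is bounded); the paper sidesteps this by arguing by contradiction along a convergent subsequence, so that a single neighborhood of the limit point suffices.
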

\begin{proof}
	Let  $(x_{k_j})_{j\in \N }$ be a bounded subsequence of $(x_k)_{k\in\N}$. To simplify the notation, let us assume without loss of generality that $k_j > \mM$ for all $j  \in \N$. We divide the proof into the following  claims.\smallskip
	
\noindent\textbf{Claim 1:} \textit{It holds that $\inf_{j \in \N} \tau_{k_j} >0$.}\smallskip

	We proceed by way of contradiction. Suppose there exists a subsequence $(\tau_{\nu_i})_{i \in \N}$ of $(\tau_{k_j})_{j \in \N}$ such that $\tau_{\nu_i} \to 0^+$ as $i \to \infty$. Given the boundedness of the sequence $(x_{k_j})_{j \in \N}$, we may, without loss of generality, assume that the corresponding subsequence $(x_{\nu_i})_{i \in \N}$ converges to some point $\bar{x}$. Moreover, since $\varphi$ is locally Lipschitz continuous and each $w_{\nu_i}$ belongs to $\partial\varphi(x_{\nu_i})$, it follows that the sequence $(w_{\nu_i})_{i \in \N}$ is bounded (see, e.g., \cite[Proposition~2.1.2]{MR1488695}). Hence, by possibly passing to a further subsequence without relabeling, we may assume that $w_{\nu_i} \to \bar{w}$ for some $\bar{w}\in\R^n$.

	Now, let us observe that assumption \eqref{assumpt01} leads to the inequality
	\begin{equation}\label{eq:wgeqd}
		- \langle w_{\nu_i}, d_{\nu_i} \rangle \geq \Csta \|d_{\nu_i}\|^2, \quad \text{for all } i \in \N.
	\end{equation}
	Applying the Cauchy--Schwarz inequality to the left-hand side of \eqref{eq:wgeqd}, and recalling that $d_{\nu_i} \neq 0$, we obtain the bound
	\begin{equation*}
		\|w_{\nu_i}\| \geq \Csta \|d_{\nu_i}\|, \quad \text{for all } i \in \N,
	\end{equation*}
	which implies that the sequence $(d_{\nu_i})_{i \in \mathbb{N}}$ is bounded. Therefore, by passing to a subsequence if necessary, we may assume that $(d_{\nu_i})_{i\in\N}$ converges to some point $\bar{d} \in \R^n$.
	
	Since $\tau_{\nu_i} \to 0^+$, we can assume that $\tau_{\nu_i} < \tau_{\min}$ for all $i \in \N$. Then, by step~\ref{step:while} of Algorithm~\ref{alg:1}, we have
	\begin{equation}\label{eq:cont-1}
		\varphi(x_{\nu_i}+ \beta^{-1} \tau_{\nu_i}d_{\nu_i}) > \max_{ [\nu_i-\mM_k]^+ \leq s \leq \nu_i } \varphi(x_{s}) + \sigma \beta^{-1}\tau_{\nu_i} \langle w_{\nu_i},d_{\nu_i} \rangle.
	\end{equation}
	Note that $x_{\nu_i} + \beta^{-1} \tau_{\nu_i} d_{\nu_i} \to \bar{x}$. Thus, by applying Proposition~\ref{p:deslemma}\ref{it:p25-iii} at the point $\bar{x}$, there exists a constant $\kappa \geq 0$ such that, for sufficiently large $i \in \N$,
	\begin{equation}\label{eq:cont-2}
		\varphi(x_{\nu_i}+ \beta^{-1} \tau_{\nu_i}d_{\nu_i}) \leq \varphi(x_{\nu_i}) + \beta^{-1}\tau_{\nu_i} \langle w_{\nu_i},d_{\nu_i} \rangle + \kappa \beta^{-2} \tau_{\nu_i}^2 \|d_{\nu_i}\|^2.
	\end{equation}
	Combining inequalities \eqref{eq:wgeqd}--\eqref{eq:cont-2}, we deduce that
	\[
	\begin{aligned}
		\sigma \beta^{-1} \tau_{\nu_i} \langle w_{\nu_i}, d_{\nu_i} \rangle &< \varphi(x_{\nu_i} + \beta^{-1} \tau_{\nu_i} d_{\nu_i}) - \max_{ [\nu_i - \mM_k]^+ \leq s \leq \nu_i } \varphi(x_s) \\
		&\leq \varphi(x_{\nu_i} + \beta^{-1} \tau_{\nu_i} d_{\nu_i}) - \varphi(x_{\nu_i}) \\
		&\leq \beta^{-1} \tau_{\nu_i} \langle w_{\nu_i}, d_{\nu_i} \rangle + \kappa \beta^{-2} \tau_{\nu_i}^2 \|d_{\nu_i}\|^2 \\
		& \leq \beta^{-1} \tau_{\nu_i} \left(1 - \frac{\kappa}{\Csta \beta} \tau_{\nu_i} \right) \langle w_{\nu_i}, d_{\nu_i} \rangle,
	\end{aligned}
	\]
	for all sufficiently large $i \in \N$. Rearranging the resulting expression, and using the fact that $\langle w_{\nu_i}, d_{\nu_i} \rangle < 0$, we conclude that
	\[
	\sigma > 1 - \frac{\kappa}{\Csta \beta} \tau_{\nu_i}.
	\]
	Since $\tau_{\nu_i} \to 0^+$, this inequality contradicts the assumption that $\sigma \in (0, 1)$. Hence, this ends the proof of the claim.\smallskip

	\textbf{Claim 2:} \textit{For every $p \in \{0,\ldots,\mM+1\}$, the sequences $(x_{k_j+p})_{j \in \mathbb{N}}$ and $(x_{k_j-p})_{j \in \mathbb{N}}$ are bounded.}
	
	\smallskip
	
	We prove this claim by induction. The base case $p = 0$ holds by assumption. Now, suppose that $(x_{k_j+p})_{j \in \mathbb{N}}$ and $(x_{k_j-p})_{j \in \mathbb{N}}$ are bounded for some $p \in \{0,\ldots,\mM\}$.  Let us define
	\begin{equation}\label{def:hatsigma}
		\hat{\sigma} := \frac{\sigma \Csta}{\tau_{\max}}.
	\end{equation}
	Using the sufficient decrease condition \eqref{eq:suff_dec}, we obtain that
	\begin{align*}
		\hat{\sigma} \|x_{k_j+p+1} - x_{k_j+p}\|^2 & \leq \frac{\sigma \Csta}{\tau_{k_j+p}} \|x_{k_j+p+1} - x_{k_j+p}\|^2 \\
		&\leq \varphi(x_{\ell(k_j+p)}) - \varphi(x_{k_j+p+1}) \\
		&\leq \varphi(x_{0}) - \inf_{x \in \mathbb{R}^n} \varphi(x),
	\end{align*}
	where the last inequality follows from the fact that $\varphi(x_{\ell(k)}) \leq \varphi(x_{\ell(0)})=\varphi(x_{0})$ for all $k$ (recall~\eqref{eq:monot}). Since  $(x_{k_j+p})_{j \in \mathbb{N}}$  is bounded, this upper bound implies that $(x_{k_j+p+1})_{j \in \mathbb{N}}$ is also bounded. A similar argument, applying~\eqref{eq:suff_dec} to ${k_j-p-1}$, permits to show that $(x_{k_j-p-1})_{j \in \mathbb{N}}$ is also bounded, and completes the inductive step. \smallskip

	\textbf{Claim 3:} \textit{For every $p \in \{0,\ldots,\mM+1\}$, the sequence $(x_{\ell(k_j+m+1)-p})_{j \in \mathbb{N}}$ is bounded.}\smallskip

	Let us notice that
    $$k_j- \mM\leq k_j+1- p\leq \ell(k_j+m+1) - p \leq k_j+m+1- p\leq k_j+\mM+1.$$ Then,
	\[
	x_{\ell(k_j+m+1)-p} \in \bigcup_{i = -\mM}^{\mM+1} \{ x_{k_j+ i} \},\quad\text{for all }j\in\N,
	\]
	which shows that $(x_{\ell(k_j+m+1)-p})_{j \in \mathbb{N}}$ is bounded, due to Claim 2.\smallskip
	
	\textbf{Claim 4:} \textit{For every $p\in\{0,\ldots,\mM\}$, we have that}
		\begin{align}
			&\lim_{j \to \infty} \| x_{\ell(k_j+\mM+1)-p} - x_{\ell(k_j+\mM+1) - p-1} \| = 0 \label{eq:limitDX}, \\
			&\lim_{j \to \infty} \varphi(x_{\ell(k_j+\mM+1) - p-1}) = \lim_{k \to \infty} \varphi(x_{\ell(k)}) \label{eq:limitvarphiX}.
		\end{align}

	We proceed by induction on $p \in\{0,\ldots,\mM\}$. First, we prove both~\eqref{eq:limitDX} and~\eqref{eq:limitvarphiX} for $p = 0$.  By replacing $k$ by $\ell(k_j+\mM+1)-1$ in~\eqref{eq:suff_dec}, we obtain
	\begin{align*}
		\varphi(x_{\ell(k_j+\mM+1)}) &\leq \varphi(x_{\ell(\ell(k_j+\mM+1)-1)}) - \hat{\sigma} \|x_{\ell(k_j+\mM+1)} - x_{\ell(k_j+\mM+1)-1}\|^2,
	\end{align*}
	where $\hat{\sigma}$ was defined in \eqref{def:hatsigma}. Recalling that $\lim_{k \to \infty} \varphi(x_{\ell(k)})$ exists, we get that~\eqref{eq:limitDX} holds for $p = 0$.
	
	Now, to prove \eqref{eq:limitvarphiX}, observe that since $\varphi$ is locally Lipschitz, it is uniformly continuous over the bounded set $\{ x_{\ell(k_j+\mM+1)}, x_{\ell(k_j+\mM+1)-1} : j \in \mathbb{N} \}$ (recall Claim~3), which implies by~\eqref{eq:limitDX} that
	\[
	\lim_{j \to \infty} \varphi(x_{\ell(k_j+\mM+1)-1}) = \lim_{j \to \infty} \varphi(x_{\ell(k_j+\mM+1)}) = \lim_{k \to \infty} \varphi(x_{\ell(k)}).
	\]
	This proves the claim for $p = 0$.
	
	Now, assume that \eqref{eq:limitDX} and \eqref{eq:limitvarphiX} hold for some $p \in \{0,\ldots,\mM-1\}$. Similarly, by replacing $k$ by $\ell(k_j+\mM+1)-p-2$ in~\eqref{eq:suff_dec}, we obtain
	\begin{align*}
		\varphi(&x_{\ell(k_j+\mM+1)-p-1})\\
 &\leq \varphi(x_{\ell(\ell(k_j+\mM+1)-p-2)}) - \hat{\sigma} \|x_{\ell(k_j+\mM+1)-p-1} - x_{\ell(k_j+\mM+1)-p-2}\|^2.
	\end{align*}
	This implies that \eqref{eq:limitDX} holds for $p+1$, since, by the induction hypothesis, 
	\[\lim_{j \to \infty} \varphi(x_{\ell(k_j+\mM+1)-p-1}) = \lim_{k \to \infty} \varphi(x_{\ell(k)}).
	\]
	Finally, by uniform continuity of $\varphi$ over the bounded set $\{ x_{\ell(k_j+\mM+1)-p-1}, x_{\ell(k_j+\mM+1)-p-2} : j \in \mathbb{N} \}$ (recall Claim 3), we get that \eqref{eq:limitvarphiX} holds for $p+1$. This completes the proof of the claim.\smallskip

	\textbf{Claim 5:} \textit{Assertion~\ref{it:conv-1} holds.} \smallskip

	Indeed, the first part of assertion~\ref{it:conv-1} was proved in Claim~1. Now, let us observe that $k_j+1\leq \ell(k_j + \mM+1) \leq k_j + \mM+1$, so $k_j+1=\ell(k_j+\mM+1)-p$ for some $p\in\{0,\ldots,\mM\}$. Therefore, using Claim~4, we obtain
	\begin{align*}
		\| x_{k_j + 1} - x_{k_j} \| &\leq \max_{p = 0, \ldots, \mM} \| x_{\ell(k_j+\mM+1) -p} - x_{\ell(k_j+\mM+1) - p-1} \| \to 0, \\
		| \varphi(x_{k_j}) - \varphi^\ast | &\leq \max_{p = 0, \ldots, \mM} | \varphi(x_{\ell(k_j+m+1)-p-1}) - \varphi^\ast | \to 0,
	\end{align*}
	where $\varphi^\ast := \lim_{k \to \infty} \varphi(x_{\ell(k)})$. Now, recalling Claim~1 and the fact that
	\[
	\| d_{k_j} \| = \frac{ 1}{ \tau_{k_j} } \| x_{k_j+1} - x_{k_j}\| \leq  \frac{1}{\inf_{ j \in \N} \tau_{k_j} }\| x_{k_j + 1} - x_{k_j} \|,
	\]
	we conclude that $\lim_{j \to \infty} \| d_{k_j} \| = 0$. Finally, by the upper bound~\eqref{assumpt02}, we have that $\lim_{j \to \infty} \| w_{k_j} \| = 0$. \smallskip

	\textbf{Claim 6:} \textit{Assertion~\ref{it:conv-2}  holds.} \smallskip

	Indeed, if $(x_k)_{k \in \mathbb{N}}$ is bounded,  Claim 1  yields
	\begin{align}\label{ineqdk}
		\| d_k\| &\leq \frac{1}{\inf_{ j \in \N} \tau_j }\| x_{k+1} - x_k\|, \quad \text{for all  }k \in \N.
	\end{align}
	Therefore, applying \eqref{assumpt02}, \eqref{ineqdk} and \eqref{sum_diff}  we get that
		$$\min_{ 0 \leq j \leq k} \left\{ \| x_{j+1} - x_{j}\|  + \| w_j\| + \| d_j\|\right\} \leq  \left(1+\frac{\Cstb+1}{\inf_{ j \in \N} \tau_j }\right)\frac{ \Cstc}{ \sqrt{ k+1}},$$
    so \eqref{rates} holds.
	
	Moreover, by assertion~\ref{it:conv-1}, we have that the sequence $(x_k)_{k \in \mathbb{N}}$ satisfies the so-called \emph{Ostrowski condition}, that is,
	\[
	\lim_{k \to \infty} \| x_{k+1} - x_k \| = 0,
	\]
	which implies that the set of accumulation points of $(x_k)_{k \in \mathbb{N}}$ is nonempty, closed, and connected (see, e.g., \cite[Theorem~8.3.9]{MR1955649}).\smallskip
	
	\textbf{Claim 7:} \textit{Assertions~\ref{it:conv-3} and~\ref{it:conv-4} hold.} \smallskip
	
	Suppose that $x_{k_j}\to\bar{x}$ when $j\to\infty$. Using assertion~\ref{it:conv-1}, we have that $w_{k_j} \to 0$ as $j \to \infty$. Since $w_{k_j} \in \partial \varphi(x_{k_j})$ for all $j \in \mathbb{N}$, the closedness of the subdifferential mapping of  locally Lipschitz functions (see, e.g., \cite[Proposition~2.1.5]{MR1488695}) implies that $0 \in \partial \varphi(\bar{x})$.

	Assertion~\ref{it:conv-4} follows from \cite[Proposition~8.3.10]{MR1955649}, which ensures that the whole sequence $(x_k)_{k \in \mathbb{N}}$ converges to a point $\bar{x}$.
This point $\bar{x}$ is stationary due to assertion~\ref{it:conv-3}.
\end{proof}

\begin{remark}
(i)	Although Algorithm~\ref{alg:1} and Theorem~\ref{t:conv} are stated in terms of the Clarke subdifferential, we emphasize that the same conclusions of Theorem~\ref{t:conv} remain valid when the sequence $(w_k)_{k \in \N} $ is selected from the (Mordukhovich) limiting subdifferential of $\varphi$. This follows from the fact that the limiting subdifferential is contained within the Clarke subdifferential, ensuring that the descent condition \eqref{equpperdesc} still holds. In this case, the convergence result guarantees stationarity in the sense of the Mordukhovich subdifferential, which is a stronger condition than the Clarke-type stationarity defined in \eqref{eq:st-Clarke}.
	
	More generally, a closer examination of the proof of Theorem~\ref{t:conv}\ref{it:conv-3} reveals that one can establish $\lim_{j \to \infty} w_{k_j} = 0$, provided that the subsequence $(x_{k_j})_{j \in \mathbb{N}}$ converges. This, in turn, allows the interpretation of stationarity to vary depending on the specific subdifferential from which the subgradients $w_{k_j}$ are taken, potentially yielding different notions of stationary points.

(ii) While  assumptions of the form of $\sup_{k\in\N} \tau_k < +\infty$, as required in Theorem~\ref{t:conv}, are common in the literature (see, e.g.,~\cite[Theorem~3.11]{aragonartacho2023boosted}),
this condition can also be enforced by restricting the trial stepsize $\bar{\tau}_k$ to be bounded from above by a positive constant  in step~\ref{step:mk} of Algorithm~\ref{alg:1}.
This approach was adopted, for instance, in~\cite[Algorithm~1]{aragonartacho2023coderivativebased}. Notably, imposing this upper bound was unnecessary in our numerical experiments,
since the sequence remained bounded even under the self-adaptive rule introduced in the next section.
\end{remark}

\section{Self-adaptive setting of stepsize and memory parameters}\label{sect:sa}

The possibility of performing nonmonotone iterations might confront the purpose of the line search, which aims to achieve a significant reduction of the objective value. An inadequate choice of the memory parameter $m_k$ or the initial trial stepsize parameter $\bar{\tau}_k$ could slow down the resulting algorithm. To obtain a good performance, both parameters need to be chosen in accordance.
The purpose of this section is to propose a specification of Algorithm~\ref{alg:1} in which both $\bar{\tau}_k$ and $m_k$ are automatically updated by the scheme. Its pseudocode is presented in Algorithm~\ref{alg:SNSM} and its principal features  are discussed below.

\begin{algorithm}[h]
	\begin{algorithmic}[1]
		\Require{$x_0 \in \R^n$; $\tau_{\min} >0$; $\overline{\tau}_0\geq\tau_{\min}$; $\sigma,\beta\in{]0,1[}$; $\gamma>1$; $\mM_0,\mM\in\N$ with $\mM_0\leq\mM$.}
        \State{Set $\bar{\tau}_{-1}:=\overline{\tau}_0$ and $\tau_{-1}:=\overline{\tau}_0$.}
		\For{$k=0,1,\ldots$}
		\State Take $w_k \in  \partial \varphi(x_k)$; \textbf{if} $w_k=0$ \textbf{then} STOP and return $x_k$.\label{step2:wk}
		\State Choose any $d_k \in\mathbb{R}^n\backslash\{0\}$ such that $\langle w_k,d_k\rangle<0$.  \label{step2:dk}
		\State Set the stepsize $\tau_k:=\overline{\tau}_k$. \label{step2:initial_stepsize}
		\State{\textbf{if} $\varphi(x_k + \tau_k d_k) \geq  \max\limits_{  {[k-\mM_k]^+\leq i \leq k}       }   \varphi(x_i)   +  \sigma \tau_k \langle w_k, d_k\rangle$ \textbf{then}}\label{step2:mk}
		\State{\hspace{\algorithmicindent}  $\mM_k = \min\{\mM_k + 1,\mM\}$;}
		\State{\textbf{end if}}\label{step2:endmk}
		\While{$\varphi(x_k + \tau_k d_k) \geq  \max\limits_{  {[k-\mM_k]^+\leq i \leq k}       }   \varphi(x_i)   +  \sigma \tau_k \langle w_k, d_k\rangle$}\label{step2:while}
		\State $\tau_k = \beta \tau_k$;
		\EndWhile
		
		\State	  Set $x_{k+1}:=x_k + \tau_k d_k$.\label{step2:update}
		\If{$\tau_{k-1}=\overline{\tau}_{k-1}$ \textbf{and} $\tau_{k}=\overline{\tau}_{k}$}
		\State $\overline{\tau}_{k+1}:=\gamma\tau_{k}$;
		\State $\mM_{k+1} := 0$;
		\Else
		\State $\overline{\tau}_{k+1}:=\max\{\tau_{k},\tau_{\min}\}$;
		\State $\mM_{k+1} := \min\left\{j \in \{0,\dots,\mM_k\} : \varphi(x_k + \tau_k d_k) < \varphi(x_{k-j}) + \sigma \tau_k \langle w_k, d_k\rangle \right\}$;

		\EndIf
		\EndFor
	\end{algorithmic}
	\caption{Self-adaptive Nonmonotone Subgradient Method (\salg)}\label{alg:SNSM}
\end{algorithm}

Observe that the main computational steps  of Algorithm~\ref{alg:SNSM} coincide with those of Algorithm~\ref{alg:1}: steps~\ref{step2:wk}-\ref{step2:dk} and steps~\ref{step2:while}-\ref{step2:update} are also part of Algorithm~\ref{alg:1}. The most significant difference is that Algorithm~\ref{alg:SNSM} automatically selects the initial trial stepsize~$\bar{\tau}_k$ and the memory parameter~$m_k$ based on the performance of the line search in the previous iterations. First, the initial parameter for the line search is set in step~\ref{step2:initial_stepsize} as $\tau_k = \bar{\tau}_k$.
Then, steps~\ref{step2:mk}-\ref{step2:endmk} check whether the decrease condition is satisfied by the current memory parameter~$m_k$, otherwise $m_k$ is increased by one (unless it gets largen than the maximum memory parameter~$m$). In this way, these steps relax the acceptance condition by exploiting the nonmonotonicity with the purpose of permitting larger stepsizes.

After performing the nonmonotone  Armijo-type line search and updating the iterate in step~\ref{step2:update}, the algorithm determines the initial trial stepsize $\bar{\tau}_{k+1}$ and the memory parameter $m_{k+1}$ for the next iteration. If at the previous and current iterations the initial trial stepsizes were accepted (i.e., $\tau_{k-1} = \bar{\tau}_{k-1}$ and $\tau_{k}=\bar{\tau}_k$), then  the starting stepsize for iteration $k+1$ is increased by setting $\bar{\tau}_{k+1}:=\gamma \tau_k$, with $\gamma>1$. In addition, the memory parameter is set to~$m_{k+1}:=0$, thereby initially aiming for a monotone decrease at the next iteration. Otherwise, if the initial trial stepsize failed to be accepted in any of the two previous iterations, we take $\bar{\tau}_{k+1} := \max\{\tau_k,\tau_{\min}\}$ and the memory parameter $m_{k+1}$ is set to the smallest value satisfying the nonmonotone decrease condition, this is
\[
\mM_{k+1} := \min\left\{j \in \{0,\dots,\mM_k\} : \varphi(x_k + \tau_k d_k) < \varphi(x_{k-j}) + \sigma \tau_k \langle w_k, d_k\rangle \right\}.
\]

\begin{remark}
Note that if one chooses $m=0$, then $m_k=0$ for all $k\in\N$ and Algorithm~\ref{alg:SNSM} becomes a monotone method. In such a case, the self-adaptive procedure is limited to determining the trial initial stepsize $\bar{\tau}_k$ at every iteration. This is exactly the routine described in~\cite[Algorithm~3]{aragonartacho2023coderivativebased}. This and similar procedures have been reported in numerical experiments to significantly accelerate the convergence of other line search algorithms (see, e.g.,~\cite{aragonartacho2023boosted,MR4078808}).
\end{remark}

\section{Minimum sum-of-squares clustering problem}
\label{sect:appl}

In this section, we present an application of \salg\ (Algorithm~\ref{alg:SNSM}) to  the \emph{minimum sum-of-squares clustering problem}, which is an unsupervised machine learning technique. In what follows, we devise a concrete implementation of the algorithm to this problem by providing a formula for the computation of subgradients of the objective function and appropriate search directions that perform well in practice, as illustrated in the numerical experiments reported in Section~\ref{sect:numerics}.

This data mining task aims to split a set of data points $\{a^1,\ldots,a^p\}\subseteq\R^s$ into $\ell$ groups or \emph{clusters} by minimizing the squared distance of each point to a representative of its group, called \emph{centroid}. Such a problem can be expressed as the minimization of the function ${\varphi}_{MSC}:\R^{s\times \ell} \to\R$ defined as
\begin{equation}\label{eq:f_clust}
\varphi_{MSC}(X) := \frac{1}{p} \sum_{j=1}^p  \omega^{j}(X), \quad \text{with } \omega^j(X):= \min_{t\in\{1,2,\ldots,\ell\}}\|x^t - a^j\|^2,
\end{equation}
where $X=(x^1, x^2,\ldots,x^{\ell}) \in\R^{s\times\ell}$ contains the $\ell$ centroids to be found. Observe that  $\varphi_{MSC}$ perfectly fits under our framework, as it is upper-$\mathcal{C}^2$ but nonconvex  and nondifferentiable (see Figure~\ref{fig:Example_MSC}). Indeed,
the functions $\omega^j$ are upper-$\mathcal{C}^2$  by definition, so $\varphi_{MSC}$ is upper-$\mathcal{C}^2$.

\begin{figure}[h!]\centering
\includegraphics[width=0.5\textwidth]{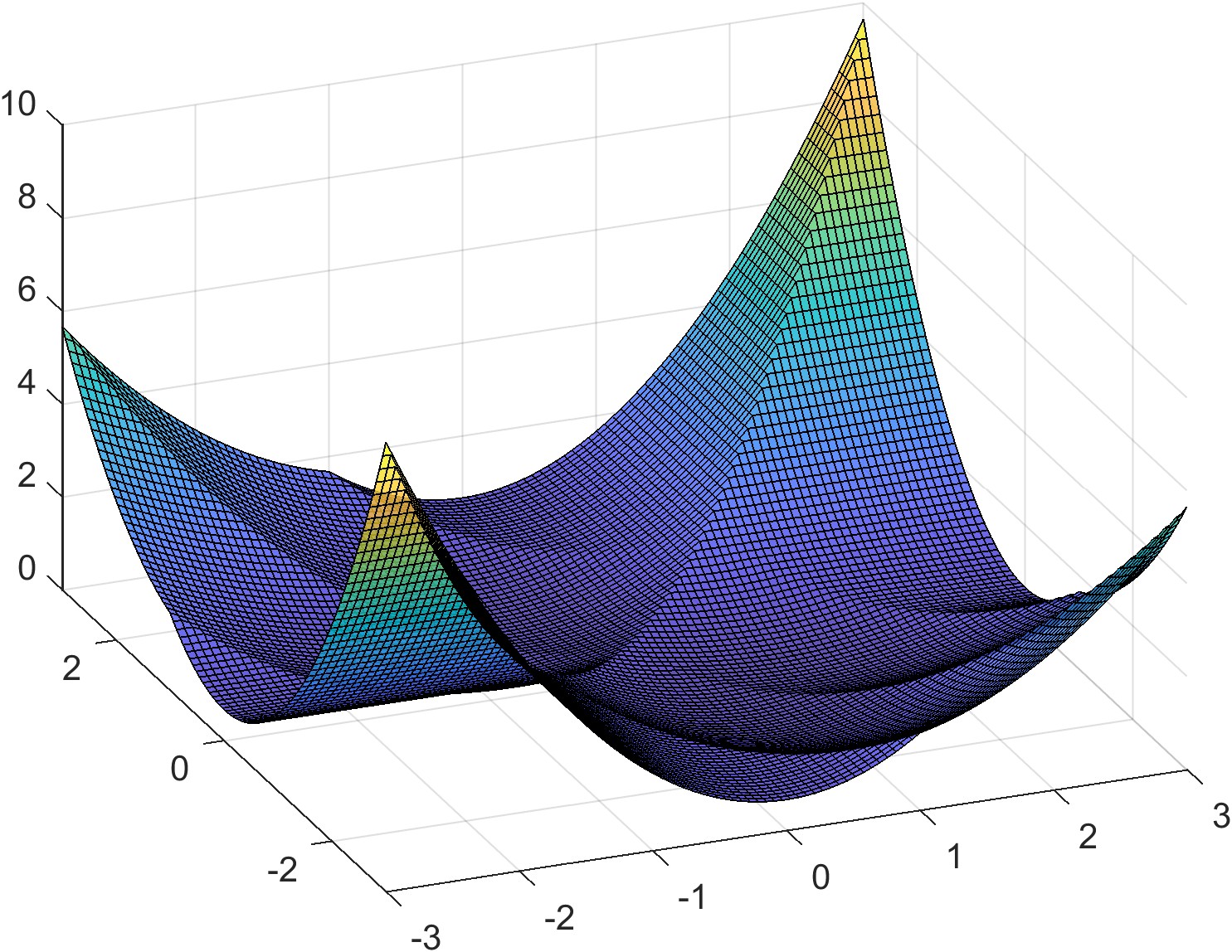}
\caption{Illustration of the function $\varphi_{MSC}$ for $m=3$, $s=1$ and $\ell=2$, with $(a^1,a^2,a^3)=(-1,0,1)$}\label{fig:Example_MSC}
\end{figure}

The essential steps for the implementation of \salg\ for this problem are the selection of a Clarke subgradient of $\varphi_{MSC}$ in step~\ref{step2:wk} and  the computation of an appropriate descent direction in step~\ref{step2:dk}. The former has been discussed in \cite[Section~4.2]{aragonartacho2023boosted}, where the authors provided a close expression for an element in the limiting subdifferential of the functions~$\omega^j$.  Accordingly, we propose in step~\ref{step2:wk} of the $k$-th iteration to choose an element of the form
\[
w_k = \frac{1}{p} \sum_{j=1}^p w_k^j,
\]
with%
\[
w_k^j \in  \left\{    (0,\ldots, 0, \underbrace{2(x^t_k - a^j)}_{
			t\text{-th position} }, 0 , \ldots,0)\in\R^{s\times\ell}     : \omega^j(X_k) = \| x^t_k - a^j\|^2         \right\},
\]
where $X_k:=(x_k^1,\ldots,x_k^{\ell})$.  Note that, in general, $w_k$ is not guaranteed to belong to the limiting subdifferential because of the possible failure of the subdifferential sum rule, but $w_k\in\partial\varphi_{MSC}(X_k)$ (see~\cite[Proposition~4.1]{aragonartacho2023boosted}). This choice allows for an efficient subgradient evaluation while leads to an adequate optimality condition.

Secondly, in order to compute the search direction $d_k$ for step~\ref{step2:dk}, we propose to incorporate second-order information by employing the Hessian matrix of a local approximation to $\varphi_{MSC}$ that only takes into consideration one of the~active indices. More precisely, at the $k$-th iteration we select, for each $j\in\{1,\ldots,p\}$, an active index $i_k(j)$ at $X_k$, i.e.,
\begin{equation}\label{activeindex}
i_k(j)\in \argmin_{t\in\{1,\dots,\ell\}} \|x_k^t - a^j\|^2.
\end{equation}
For such indices,  we define the function
\[
{\varphi}^k_{MSC}(X) := \frac{1}{p} \sum_{j=1}^p \|x^{i_k(j)} - a^j\|^2.
\]
Since $i_k(j)$ is fixed for all $j\in\{1,\ldots,p\}$, this function is just a sum of squared norms and hence differentiable. Moreover, its value at the current iterate coincides with that of the original function; i.e., ${\varphi}^k_{MSC}({X_k})={\varphi}_{MSC}({X_k})$.
Now, note that the Hessian of ${\varphi}^k_{MSC}$ corresponds to a diagonal matrix with diagonal given block-wise by twice the number of times that each block variable appears in the sum, i.e.,
\[
\nabla^2 {\varphi}^k_{MSC}(X)  = \frac{1}{p}\blkdiag\bigl(2 q_{1}(k) I,2 q_{2}(k) I,\ldots,2 q_{\ell}(k) I\bigr),
\]
where $I$ is the identity matrix in $\R^{s\times s}$ and $q_{t}(k)$ denotes the number of times that each index $t\in\{1,2\ldots,\ell\}$  has been chosen as an active index in  \eqref{activeindex}; that is,
\[q_t(k)=\left|\left\{ j\in\{1,\ldots,p\} : i_k(j)=t\right\}\right|.\]
Observe that $\nabla^2{\varphi}^k_{MSC}$ may be singular at some point if some of the indices are not active for any $j\in\{1,2,\ldots,p\}$. To overcome this problem,  we add a regularization term $\alpha_k>0$, and select $d_k$ in step~\ref{step2:dk} of \salg\ as
\begin{equation}\label{eq:dkmsos}
d_k = - \left( \nabla^2 {\varphi}^k_{MSC}(X_k) +\alpha_k I \right)^{-1} w_k.
\end{equation}
Observe that the diagonal form of the above Hessian allows for an efficient numerical computation of the descent direction without the need to construct the whole matrix or
performing full matrix inverse operations. Indeed, one can compute $d_k\in\R^{s\times\ell}$ as 
\[
d_k = -\left( \left(\frac{p}{ 2 q_{1}(k)+ p \alpha_k}, \frac{p}{2 q_{2}(k)+ p \alpha_k}, \ldots, \frac{p}{ 2 q_{\ell}(k)+ p \alpha_k} \right) \otimes \mathbbm{1}_s\right) \odot w_k ,\]%
where $\otimes$ stands for the Kronecker product, $\odot$ denotes the Hadamard product and $\mathbbm{1}_s$ is the $s$-dimensional column vector  of all ones.
Finally, observe that if one chooses $\alpha_k\in[\alpha_{\min},\alpha_{\max}]$ for all $k\in\N$, with $0<\alpha_{\min}<\alpha_{\max}$, then Assumptions~\ref{a:Bk_bounded}\ref{ass:a}-\ref{ass:b} hold, by Remark~\ref{rem:eig}. This is particularly true if one takes $\alpha_k=\alpha>0$, for all $k\in\N$.

\section{Numerical experiments}\label{sect:numerics}

In the following, we present some computational tests where we compare the performance of \salg\ against other commonly used algorithms. Specifically, we test the \emph{Difference of Convex functions Algorithm} (\emph{DCA}) introduced in~\cite{TAO1986249},  the \emph{inertial Difference of Convex functions Algorithm} (\emph{iDCA}) presented in~\cite{MR4027663} (which includes an extrapolation term to the classical DCA), the \emph{Boosted Difference of Convex function Algorithm} (\emph{BDCA}) proposed in~\cite{AragonArtacho2018,MR4078808} (which incorporates a line search to  DCA), as well as the \emph{Regularized Code\-ri\-va\-tive-based damped Semi-Newton method} (\emph{RCSN}) from~\cite{aragonartacho2023coderivativebased}. In addition, in Subsection~\ref{subsec:kmeans}, we evaluate the performance of SNSM on large data sets against large-scale clustering solvers, namely the \emph{K-means} and \emph{Mini-Batch K-Means} (\emph{MBKM}) algorithms~\cite{minibatchkmeans}.

\paragraph{Parameter tuning} In all the tests, we used the choice of parameters
\begin{equation*}\label{eq:paramtun}
\bar{\tau}_0=1,\sigma=0.2, \beta=0.2 \text{ and } \gamma = 4,
\end{equation*}
for the line search of all \salg, BDCA and RCSN (when required by the convergence result of the algorithm, we fix
\(
\tau_{\min} = 10^{-4}  \text{ and } \tau_{\max} = 10^{8}
\)).
In addition, we always set \[{\mM_0=0} \text{ and }\mM=5\] in \salg, unless when we want to run its monotone version resulting from taking $\mM=0$. Finally, the inertial parameter of iDCA is set to $0.99{\rho}/{2}$, where $\rho>0$ is the strong convexity parameter of the second function in the DC decomposition, to be specified in each experiment.

For all the algorithms and experiments (with the exception of those in Subsection~\ref{subsec:kmeans}), we used the stopping criterion
\begin{equation}\label{eq:stopcrit}
\max\left\{ \frac{\|x_k -x_{k-1}\|}{\max\{\|x_ {k-1}\|,1\}},  \frac{|\varphi(x_k)-\varphi(x_{k-1})|}{\max\{|\varphi(x_{k-1})|,1\}} \right\}\leq 10^{-4},
\end{equation}
where $x_k$ denotes the $k$-th iteration of the algorithm and $\varphi$ is the objective function of the problem under consideration.

All the tests  were run on a computer with an ADM Ryxen 9 7950X 16-Core Processor, 4.50 GHz with 64GM RAM, under Windows 10 (64-bit).

\subsection{Quadratic optimization with integer constraints}\label{subsec:balls}

For our first numerical experiment, we retake a quadratic optimization problem studied in~\cite[Experiment~4]{aragonartacho2023coderivativebased} to test how \salg\ performs with respect to its monotone version. With the aim of minimizing a nonconvex quadratic function under integer constraints, the purpose is to solve the optimization problem
\begin{equation}\label{eq:inprog}
\min_{x\in C} \frac{1}{2}x^TQx+b^Tx,
\end{equation}
where $Q\in\R^{n\times n}$ is a symmetric matrix, not necessarily positive semi-definite, $b\in\R^n$, and $C\subset\R^n$ is a nonconvex set composed of $9^n$ balls centered at $\{-4,-3,-2,-1,0,1,2,3,4\}^n$ with radius $r<\frac{1}{2}\sqrt{n}$ (so that the region $[-4,4]^n$ is not fully covered).

As shown in~\cite{aragonartacho2023coderivativebased}, this nonconvex problem can be tackled using the forward-backward envelope (see also Example~\ref{ex:FBE}). Specifically, the forward-backward envelope of~\eqref{eq:inprog}  can be written as the difference of the functions
\begin{align*}
g(x) &:= \frac{1}{2} \left(Q + (\rho + \lambda^{-1})I\right) x + b^Tx, \\
h(x) &:= \frac{1}{2} x^T (2Q+\rho I) x + b^Tx + A_{\lambda} \iota_C \bigl( (I-\lambda Q)x - \lambda b) \bigr),
\end{align*}
where $A_{\lambda} \iota_C$ is the Asplund function given in~\eqref{asp} of the indicator function of $C$, and $\rho\geq0$ is a regularization parameter to possibly induce convexity to the functions $g$ and $h$ when $Q$ is not positive definite.  As in~\cite{aragonartacho2023coderivativebased}, we took $\lambda:=0.8/\|Q\|_2$, and to ensure that the above decomposition is a difference of convex functions, we chose $\rho := \max\{0,-2\lambda_{\min}(Q)\}$ for DCA. Since both iDCA and BDCA additionally require $h$ to be strongly convex, we set  for them $\rho:= 0.1 + \max\{0,-2\lambda_{\min}(Q)\}$.

When RCSN is applied to the forward-backward envelope, it gives rise to the so-called \emph{Projected-like Newton Algorithm} (\emph{PNA}) (see~\cite[Algorithm~2]{aragonartacho2023coderivativebased}), which in turn can be viewed as a particular realization of \salg\ obtained by taking $m=0$ and a specific choice of $d_k$ based on second-order information.  Thus, to compare the effect of allowing nonmonotone iterates, we decided to run  \salg\ with that direction $d_k$ derived from PNA and specified in~\cite{aragonartacho2023coderivativebased}. The regularization parameter was set to $\rho:=0$  for PNA and \salg, as these algorithms do not require a convex decomposition.

For illustration, we show in Figure~\ref{fig:iter_integers} two instances of the problem in $\R^2$ specifically chosen so that the five algorithms converge to different points.
We observe how the line search of BDCA and the inertia of iDCA permit the algorithms to explore a (relatively) larger region than DCA, while they converge in less iterations. On the other hand, the second-order information employed by PNA and \salg\ seems to drive the iterations to a better region, but the nonmonotonicity of \salg\ allows the algorithm to explore the space with more liberty. To test whether these interpretations may be valid, we generated random instances of the problem for $n\in\{2,10,25,50,100,200,500\}$ in a similar manner as in~\cite[Experiment~4]{aragonartacho2023coderivativebased}. For each problem, we ran our tests using balls of radii $\frac{c}{20}\sqrt{n}$ with $c\in\{1,2,\ldots,9\}$.

 After each algorithm stopped, we rounded the solution to the nearest integer in $[-4,4]^n$ and compared the objective value of the resulting point with that of~\salg. The results are summarized in Table~\ref{tab:balls}, where we observe how the output of \salg\ was better than that of the other algorithms. In particular, these results indicate that the self-adaptive nonmonotone scheme proposed in Algorithm~\ref{alg:SNSM} allowed to further improve the performance of PNA (which was shown in~\cite{aragonartacho2023coderivativebased} to be already better than that of both DCA and BDCA).
Finally, we show in Figure~\ref{fig:fvals_integers} the objective values attained in all the instances for $n=50$.

\begin{figure}[ht!]\centering
\includegraphics[width=0.7\textwidth]{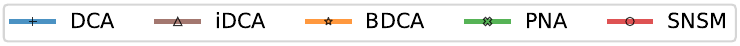}\\
\includegraphics[width=0.45\textwidth]{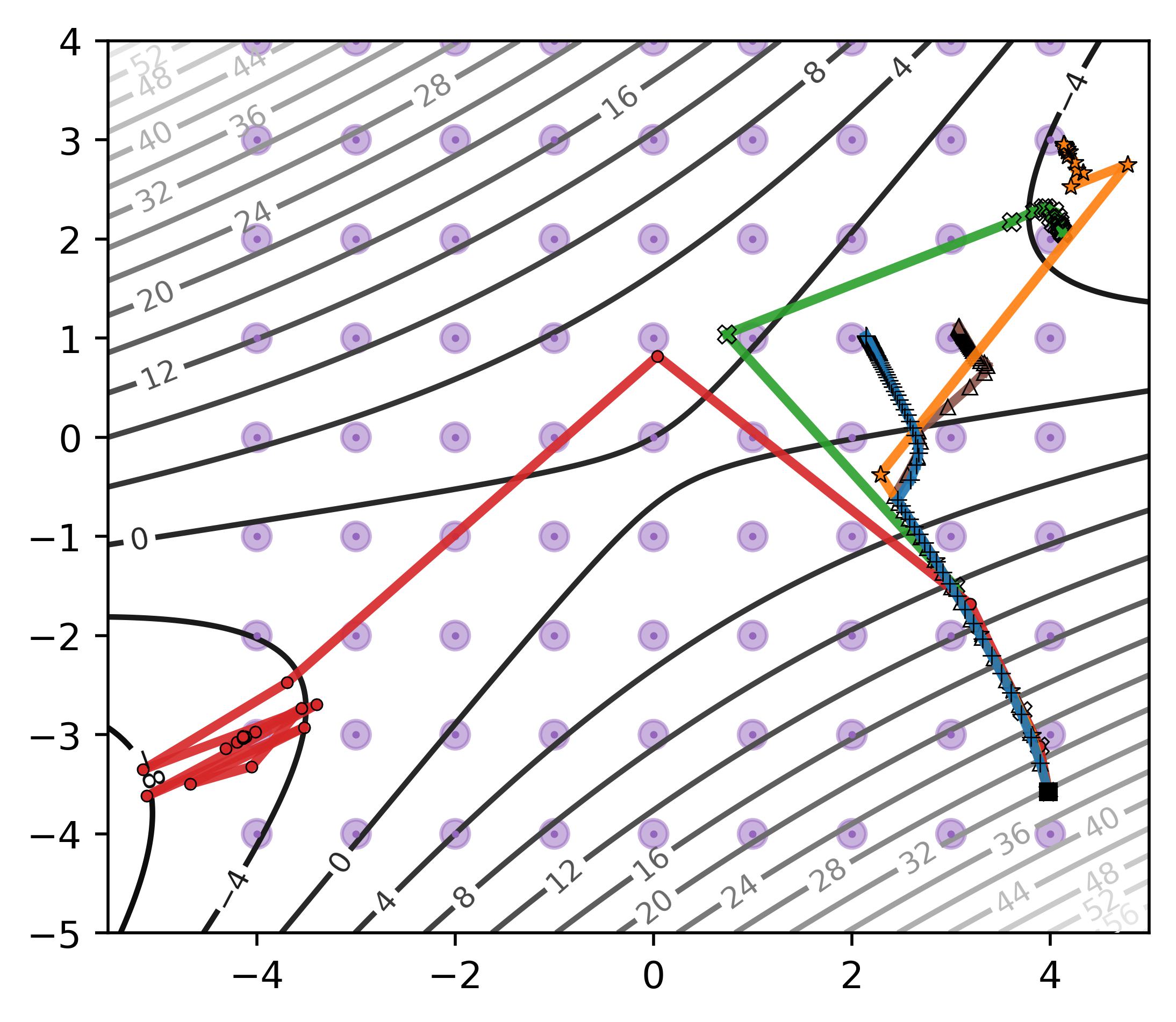}\hspace{4mm}\includegraphics[width=0.45\textwidth]{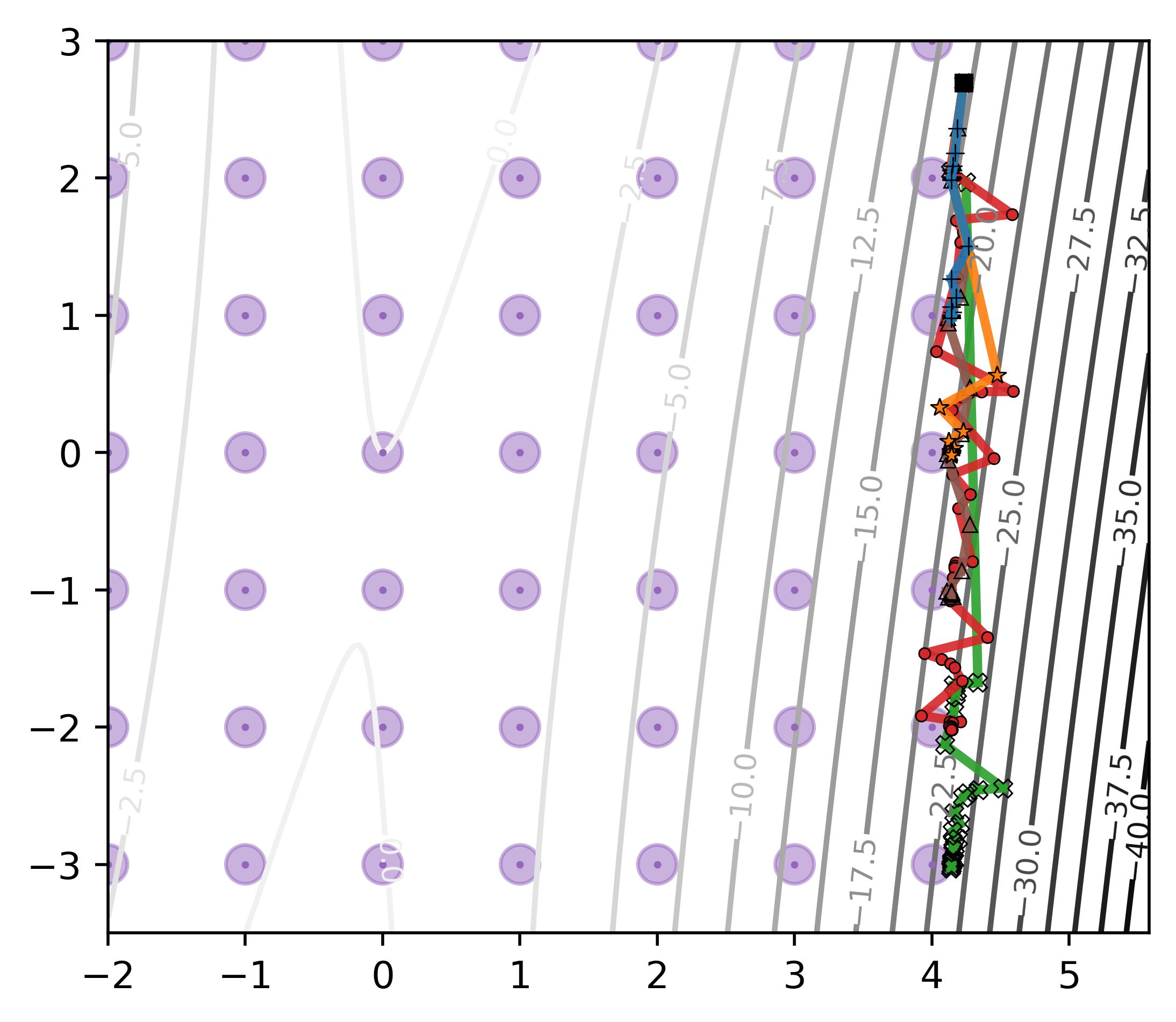}
\includegraphics[width=0.45\textwidth]{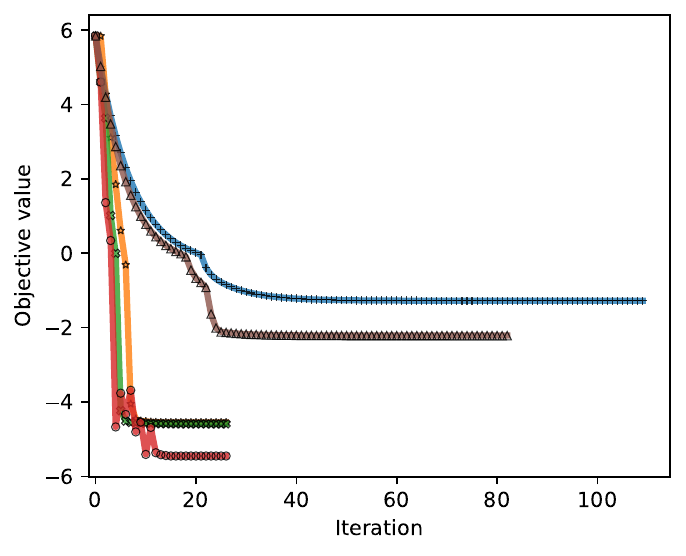}\hspace{4mm}\includegraphics[width=0.45\textwidth]{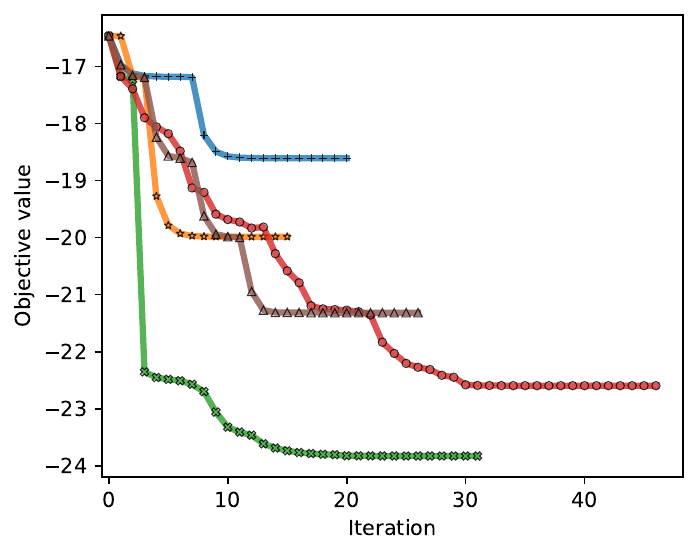}
\caption{Iterations generated by the algorithms (top) and function values (bottom) for two different problems and starting points (left and right), both with balls of radii $\frac{2}{20}\sqrt{2}$}\label{fig:iter_integers}
\end{figure}

\begin{table}[ht!]\centering
\scalebox{.85}{\centering
\begin{tabular}{c c cc cc cc cc c}
\toprule
\multicolumn{2}{c}{}&\multicolumn{9}{c}{Radius of the balls}\\
\cmidrule[.7pt]{3-11} & \salg\ vs & $\frac{1}{20}\sqrt{n}$ & $\frac{2}{20}\sqrt{n}$ & $\frac{3}{20}\sqrt{n}$ & $\frac{4}{20}\sqrt{n}$ & $\frac{5}{20}\sqrt{n}$ & $\frac{6}{20}\sqrt{n}$ & $\frac{7}{20}\sqrt{n}$ & $\frac{8}{20}\sqrt{n}$ & $\frac{9}{20}\sqrt{n}$ \\
\midrule[.7pt]
\multirow{ 4}{*}{$n=2$}& DCA & 2/13 & 0/13 & 0/4 & 0/4 & 1/6 & 0/2 & 1/1 & 0/0 & 1/2\\
& iDCA & 3/6 & 0/12 & 0/4 & 0/4 & 1/6 & 0/2 & 1/1 & 0/0 & 1/2\\
& BDCA & 3/6 & 0/9 & 0/4 & 1/3 & 1/3 & 0/1 & 1/1 & 0/0 & 1/2\\
& PNA & 6/1 & 2/7 & 2/2 & 0/1 & 1/1 & 0/1 & 1/1 & 0/0 & 1/2\\
\midrule[.5pt]
\multirow{ 4}{*}{$n=10$}& DCA & 4/70 & 14/58 & 11/62 & 6/66 & 8/69 & 9/64 & 9/66 & 7/72 & 7/66\\
& iDCA & 6/58 & 17/45 & 15/56 & 13/58 & 8/64 & 10/60 & 14/56 & 9/65 & 9/61\\
& BDCA & 14/52 & 25/39 & 21/51 & 16/51 & 15/55 & 24/54 & 21/55 & 12/49 & 15/45\\
& PNA & 28/34 & 32/25 & 34/38 & 30/41 & 20/41 & 35/41 & 30/33 & 26/36 & 21/41\\
\midrule[.5pt]
\multirow{ 4}{*}{$n=25$}& DCA & 5/79 & 6/83 & 6/69 & 2/80 & 4/75 & 3/65 & 1/71 & 5/66 & 8/59\\
& iDCA & 12/64 & 11/69 & 11/59 & 6/64 & 8/63 & 6/53 & 7/52 & 6/56 & 12/48\\
& BDCA & 7/73 & 8/79 & 12/63 & 2/74 & 6/63 & 5/58 & 6/66 & 7/59 & 8/52\\
& PNA & 13/35 & 18/33 & 18/27 & 10/33 & 12/29 & 6/26 & 16/30 & 4/35 & 16/25\\
\midrule[.5pt]
\multirow{ 4}{*}{$n=50$}& DCA & 2/97 & 3/95 & 3/93 & 2/96 & 5/93 & 2/98 & 2/95 & 2/94 & 5/92\\
& iDCA & 3/91 & 7/89 & 9/82 & 4/91 & 9/87 & 7/89 & 5/88 & 3/88 & 9/86\\
& BDCA & 12/87 & 11/85 & 10/85 & 9/89 & 7/89 & 7/89 & 6/86 & 8/86 & 10/87\\
& PNA & 20/46 & 23/55 & 22/51 & 21/46 & 20/42 & 23/39 & 24/48 & 27/40 & 14/52\\
\midrule[.5pt]
\multirow{ 4}{*}{$n=100$}& DCA & 6/94 & 9/91 & 4/95 & 11/89 & 11/89 & 11/88 & 4/96 & 7/93 & 9/91\\
& iDCA & 13/86 & 21/78 & 10/88 & 16/82 & 17/82 & 21/77 & 10/87 & 13/85 & 14/83\\
& BDCA & 17/83 & 14/85 & 12/87 & 15/85 & 14/85 & 19/79 & 10/88 & 10/90 & 12/87\\
& PNA & 34/48 & 36/46 & 24/58 & 22/54 & 33/54 & 28/53 & 20/55 & 21/57 & 25/51\\
\midrule[.5pt]
\multirow{ 4}{*}{$n=200$}& DCA & 2/98 & 3/97 & 4/96 & 4/96 & 2/98 & 0/100 & 1/99 & 0/100 & 0/100\\
& iDCA & 8/92 & 8/92 & 5/95 & 8/92 & 6/94 & 3/97 & 3/97 & 2/98 & 1/99\\
& BDCA & 8/92 & 11/89 & 11/89 & 12/88 & 9/91 & 4/96 & 3/97 & 8/92 & 5/95\\
& PNA & 33/61 & 34/62 & 32/61 & 36/55 & 26/64 & 24/67 & 24/69 & 26/66 & 14/77\\
\midrule[.5pt]
\multirow{ 4}{*}{$n=500$}& DCA & 2/98 & 2/98 & 1/99 & 1/99 & 0/100 & 0/100 & 2/98 & 1/99 & 3/97\\
& iDCA & 8/92 & 7/93 & 3/97 & 3/97 & 1/99 & 3/97 & 3/97 & 1/99 & 4/96\\
& BDCA & 10/90 & 11/89 & 6/94 & 4/96 & 3/97 & 3/97 & 5/95 & 7/93 & 6/94\\
& PNA & 47/49 & 35/63 & 30/69 & 35/61 & 31/64 & 33/66 & 25/75 & 23/72 & 29/64\\
\bottomrule
\end{tabular}}
\caption{For different values of the space dimension $n$ and different radii of the balls, we compared the output of \salg\ against DCA, iDCA, BDCA and PNA for 100 random starting points. Each entry of the table is given by $n_1/n_2$ where $n_1$ (respectively, $n_2$) is the number of intstances where \salg\ obtained higher (respectively, lower) objective function value than the competing algorithm. Since ties may occur,  $n_1$ and $n_2$ do not necessarily sum  to the total  number of 100 instances
}\label{tab:balls}
\end{table}

\begin{figure}[h!]\centering
\includegraphics[width=0.75\textwidth]{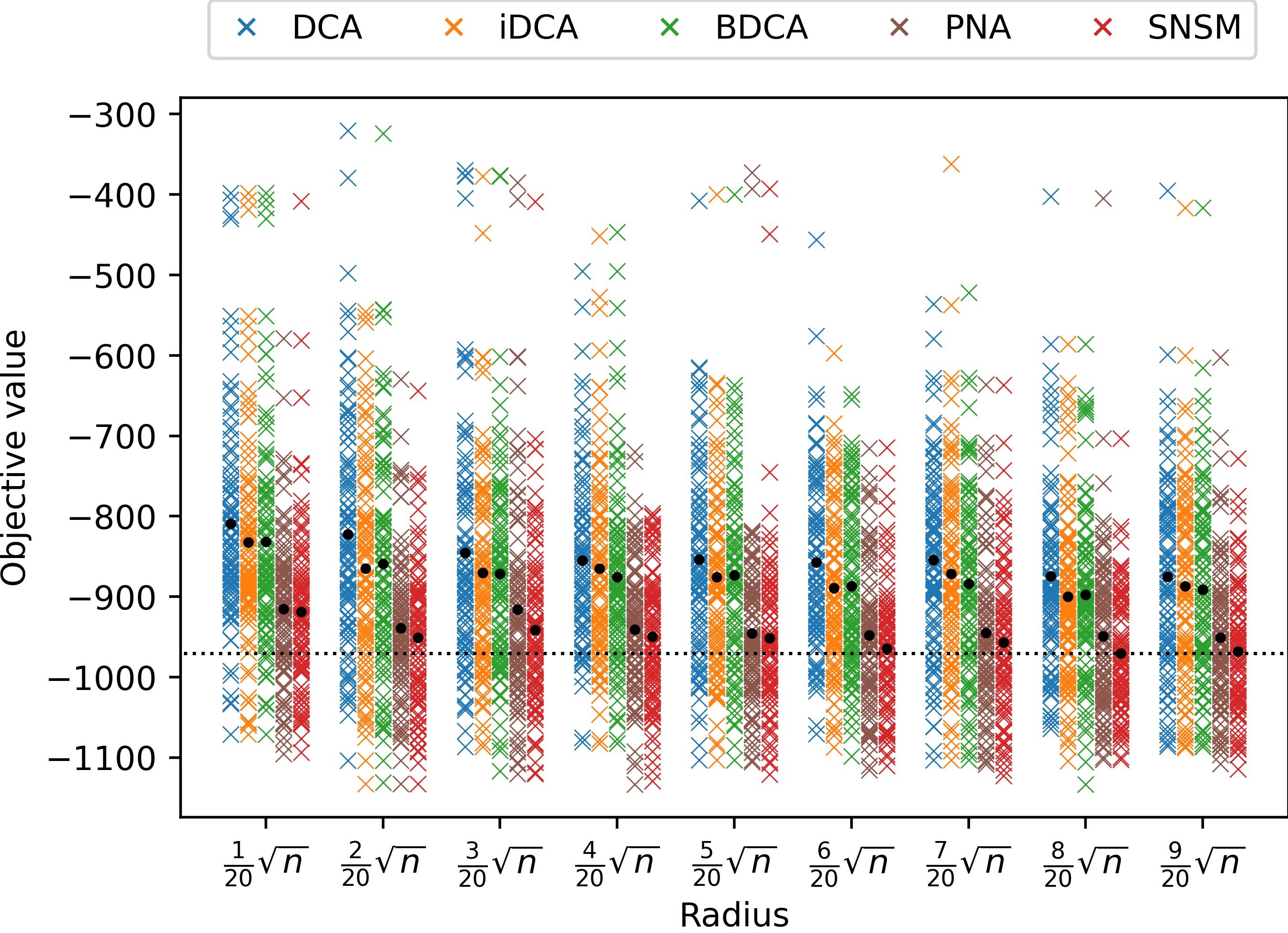}\\
\caption{Objective values attained by each of the algorithms for $n=50$ using $100$ random starting points, for different values of the radii of the balls. The mean value of each algorithm is represented by a black point and the lowest mean by a dotted line}\label{fig:fvals_integers}
\end{figure}

To further evaluate the scalability of the proposed method in higher dimensional settings, we conducted a second numerical experiment focusing on  binary quadratic programming.
We selected five large-scale instances\footnote{Specifically, the files named bqp1000-5, bqp1000-6,\ldots, bqp1000-9 from the library \url{https://biqmac.aau.at/library/biq/beasley}.} (with $n=1000$) from the well-known OR-Library \cite{Beasley01111990}, which serve as a standard benchmark for NP-hard combinatorial optimization.
For each instance, the algorithms were executed starting from 20 random initial vectors uniformly distributed in $[0,1]^n$. For each initialization, we recorded the best objective
function value achieved by each method across all tested radii.  Figure~\ref{fig:performance} displays the performance profile of the objective values~\cite{MR1875515}, which illustrates the percentage of problems
for which a method's objective value is within a factor $\varsigma$ of the best result found by any compared method. The numerical results demonstrate that SNSM outperforms the other methods.
Specifically, SNSM achieved the best objective value in approximately 80\% of the tested instances (as indicated by its value at $\varsigma=1$). Furthermore, its rapid ascent to the 1.0 level shows that it maintains a minimal deviation from the best-found value in the remaining instances, thereby exhibiting superior robustness.

\begin{figure}[h!]
\centering
\includegraphics[width=0.55\textwidth]{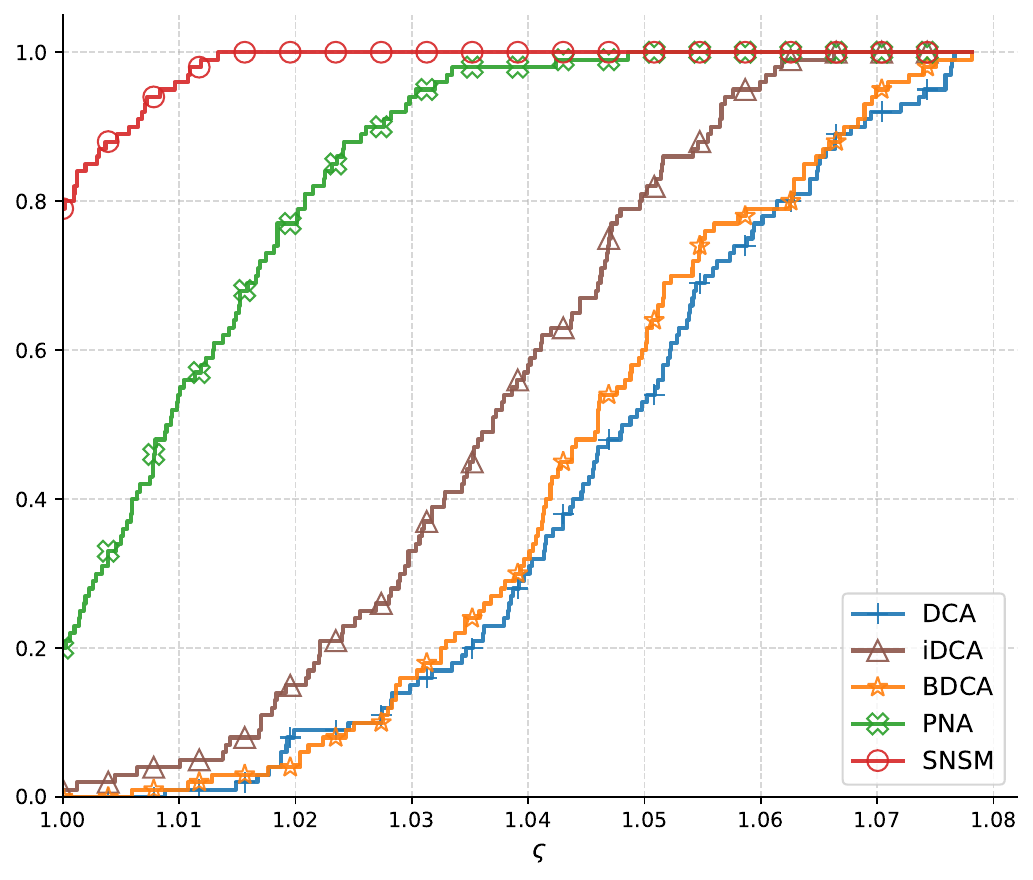}
\caption{Performance profiles for the objective function values on binary quadratic programming instances from the OR-Library}\label{fig:performance}
\end{figure}

\subsection{Minimum Sum-of-Squares Clustering Problem}\label{subsec:SOS}

The objective function $\varphi_{MSC}$ in~\eqref{eq:f_clust} for the minimum-sum-of-squares clustering problem also admits a decomposition as the difference of the convex functions
\[
g(X) := \frac{1}{p} \sum_{j=1}^p\sum_{t=1}^{\ell} \|x^t - a^j\|^2   \, \text{ and }
\, h(X):= \frac{1}{p} \sum_{j=1}^p \max_{l\in\{1,\ldots,\ell\}} \sum_{t=1,t\neq l}^{\ell}\|x^t-a^j\|^2.
\]
Further properties of this formulation can be consulted in, e.g.,~\cite{MR4139072,MR3306091}. This allows us to compare the implementation of \salg\ presented in Section~\ref{sect:appl} with that of DCA, iDCA, BDCA and RCSN over the previous decomposition, thereby extending the numerical study in~\cite[Section~5.1]{MR4078808}

Note that strong convexity of $g$ is sufficient for the DCA framework. However, BDCA and iDCA additionally require the function $h$ to be strongly convex.
To ensure this property, we apply the standard DC regularization technique and  consider the equivalent decomposition
\[
\left(g + \tfrac{\rho}{2}\|\cdot\|^2\right) - \left(h + \tfrac{\rho}{2}\|\cdot\|^2\right),\quad \text{with } \rho = 0.1.
\]
On the other hand, observe that RCSN applied to~\eqref{eq:f_clust} results again in a particular case of Algorithm~\ref{alg:SNSM} (with $m=0$) where the direction $d_k$ is taken as
$d_k = -[\nabla^2 g(X_k)]^{-1} w_k=- w_k/2$. %
In contrast, we implemented \salg\ as described in Section~\ref{sect:appl}, with the search direction given by \eqref{eq:dkmsos} and $\alpha_k:=10^{-3}$ for all $k\in\N$.  We emphasize that RCSN does not coincide in this case with the monotone implementation of \salg, unlike in our experiments in Section~\ref{subsec:balls}. Indeed, observe that  the first component $g$ is just a sum of squared Euclidean norms and hence the Hessian is simply a multiple of the identity matrix, which leads to a simple scaling of the gradient descent direction and fails to capture the true curvature of the problem. For this reason, we included in our comparison both the monotone version of \salg\ (with $\mM=0$) and its nonmonotone counterpart (with $\mM=5$).

We conducted our tests with  point configurations extracted from different datasets. Namely,  \textsc{Leaves}, \textsc{ConfLongdemo} and \textsc{Letters}, which are open-source real-world data\-sets available online at the University of California Irvine Machine Learning Repository~\cite{UCI}, as well as \textsc{Birch2} and \textsc{Birch3}, which are synthetic datasets originally introduced in~\cite{Birchsets}.  A brief description is displayed in Table~\ref{t:data-sets}.

\begin{table}[h!]
\centering
\centering
\scalebox{.85}{\centering
\begin{tblr}{
  colspec = {QS[table-format=2]S[table-format=5]S[table-format=2]},
  cell{1}{2} = {c},
  cell{1}{3} = {c},
  cell{5}{3} = {fg=MineShaft},
  hline{1-2} = {-}{0.08em},
  hline{9} = {-}{0.05em},
}
Data set        & {{{Dimension of space (s)}}} & {{{\# Data points (p)}}}  & {{{\# Clusters ($\ell$)}}}\\
\textsc{Leaves} & 64 &1 600& 100 \\
\textsc{Birch2} & 2 &100 000& 100 \\
\textsc{Birch3} & 2 &100 000& 100 \\
\textsc{ConfLongdemo}   & 3       & 164 860 & 11  \\
\textsc{Letters} & 16      & 20 000  & 26\\

\hline
\end{tblr}}
\caption{Description of the datasets employed in our numerical tests}
\label{t:data-sets}
\end{table}

For each dataset, we ran all algorithms from 10 different random starting points and reported averaged results on the number of iterations, number of function evaluations, running time and obtained function value  in Table~\ref{t:mssc_1}. The experimental results show that \salg\ beat  the other methods in every single aspect. Only RCSN obtained a lower average running time for the dataset \textsc{Birch2}, but at the cost of reaching a significantly higher function value. Regarding the comparison between the monotone and nonmonotone versions of \salg, none of them seems to be consistently better than the other for this particular problem.

\begin{table}[!ht]
\centering
\scalebox{.85}{\centering
\begin{tblr}{width = \linewidth,colspec = {QQS[table-format=3,detect-weight,  mode=text] S[table-format=3,detect-weight,  mode=text]S[table-format=1.2,detect-weight,  mode=text]S[table-format=2.2,detect-weight,  mode=text]},cell{1}{3} = {c},cell{1}{4} = {c},cell{1}{5} = {c},hline{1-2} = {-}{0.08em},}
Data set & Method & {{{\# Iterations}}} & {{{\# f evaluations}}} & {{{Time}}} & {{{Function value}}} \\
\textsc{Leaves}& DCA &197 & 394 & 0.6 & 2.11{{{$\times 10^{6}$}}}\\
& iDCA &191 & 382 & 0.6 & 2.11{{{$\times 10^{6}$}}}\\
& BDCA &68 & 300 & 0.4 & 1.97{{{$\times 10^{6}$}}}\\
& RCSN &60 & 208 & 0.3 & 2.02{{{$\times 10^{6}$}}}\\
& SNSM $\mM=0$  &37 & 87 & 0.2 & \BB  7.13{{{$\mathbf{\times 10^{5}}$}}}\\
& SNSM $\mM=5$  &\BB 25 &\BB 58 &\BB 0.1& 8.12{{{$\times 10^{5}$}}}\\
\hline\textsc{Birch2}& DCA &438 & 876 & 48.2 & 6.58{{{$\times 10^{7}$}}}\\
& iDCA &428 & 857 & 47.1 & 6.58{{{$\times 10^{7}$}}}\\
& BDCA &71 & 307 & 15.6 & 6.41{{{$\times 10^{7}$}}}\\
& RCSN &22 & 74 & 4.8 & 6.10{{{$\times 10^{7}$}}}\\
& SNSM $\mM=0$  &\BB 17 &  \BB 39 & \BB 3.0 &\BB 5.11{{{$\mathbf{\times 10^{7}}$}}}\\
& SNSM $\mM=5$  &\BB17 & 40 &  3.1 & \BB 5.11{{{$\mathbf{\times 10^{7}}$}}}\\
\hline\textsc{Birch3}& DCA &1047 & 2095 & 130.5 & 6.44{{{$\times 10^{8}$}}}\\
& iDCA &1030 & 2060 & 128.9 & 6.44{{{$\times 10^{8}$}}}\\
& BDCA &334 & 1440 & 81.8 & 5.71{{{$\times 10^{8}$}}}\\
& RCSN &95 & 325 & 23.8 & 5.41{{{$\times 10^{8}$}}}\\
& SNSM $\mM=0$  &\BB 53 &\BB 127 &\BB 11.1 & 5.06{{{$\times 10^{8}$}}}\\
& SNSM $\mM=5$  &58 & 137 & 11.8 & \BB 4.87{{{$\mathbf{\times 10^{8}}$}}}\\
\hline\textsc{ConfLongdemo}& DCA &1149 & 2299 & 41.8 & 0.25\\
& iDCA &1126 & 2252 & 40.8 & 0.25\\
& BDCA &403 & 1740 & 28.6 & 0.23\\
& RCSN &218 & 747 & 15.6 & 0.23\\
& SNSM $\mM=0$  &43 & 103 & 2.6 & \BB 0.19\\
& SNSM $\mM=5$  &\BB 37 &\BB 88 &\BB 2.2 &\BB 0.19\\
\hline\textsc{Letters}& DCA &395 & 790 & 3.7 & 57.04\\
& iDCA &356 & 712 & 3.2 & 57.38\\
& BDCA &170 & 738 & 3.0 & 52.41\\
& RCSN &113 & 383 & 2.0 & 54.60\\
& SNSM $\mM=0$  &\BB 54 &\BB 127 &\BB 0.8& \BB 36.58\\
& SNSM $\mM=5$  &56 & 132 & 0.9 & 37.44\\
\hline
\end{tblr}}
\caption{Numerical results for  minimum sum-of-squares clustering model for the different datasets, averaged over 10 random initial points}\label{t:mssc_1}
\end{table}

\subsubsection{Comparison  with  large-scale clustering solvers}\label{subsec:kmeans}

While SNSM was not primarily developed to compete with algorithms tailored for large datasets, we performed further numerical experiments to assess its scalability. Specifically, we compared our proposal against the standard \mbox{K-means} and Mini-Batch K-Means (MBKM) algorithms, as implemented in the \texttt{scikit-learn} Python library~\cite{scikit}.
Because \texttt{scikit-learn}'s K-means and MBKM rely on highly optimized Cython and C-extensions, we implemented our algorithm using Numba's Just-In-Time (JIT) compiler to guarantee a more fair runtime comparison. We also replaced our stopping criterion~\eqref{eq:stopcrit} by
\begin{equation}\label{eq:stopcrit2}
\frac{\|x_k -x_{k-1}\|}{\max\{\|x_{k-1}\|,1\}}\leq 10^{-4},
\end{equation}
to match the one used for \emph{K-means} in this library (note that MBKM relies on a different internal stopping condition, despite using the same tolerance parameter; see the \texttt{scikit-learn} documentation).

First, we compared the algorithms' performance for two real-world datasets included in the \texttt{scikit-learn} library, namely, \emph{covtype} ($s=54$, \mbox{$p=581\,012$}, $\ell=7$) and \emph{kddcup99} ($s=41$, \mbox{$p=494\,021$}, \mbox{$\ell=5$}). For each dataset, we  generated 10 sets of initial cluster centers using \texttt{scikit-learn}'s \texttt{kmeans\_plusplus} function (the standard warm-start technique used for K-means and MBKM) with different random seeds. Subsequently, we executed K-means, MBKM and SNSM (for $m=0$ and $m=5$) using each of these initializations. The batch size for MBKM  was set to 5000.\\

We used the following three metrics to evaluate clustering quality:
\begin{itemize}
\item The \emph{Rand Index} (\emph{RI}), which measures the similarity between the clustering assignments and the ground truth sample labels. A perfect labeling achieves an RI of $1$, whereas poorly agreeing assignments yield lower scores.
\item The \emph{Silhouette Coefficient} (\emph{SC}), which for a single data point is given by the ratio $(sc_1 - sc_2)/\max\{sc_1,sc_2\}$, where $sc_1$ is the mean distance between the data point and all other points in the next nearest cluster, and $sc_2$ is the mean distance between the data point and all other points in the same cluster. The overall SC for a dataset is computed as the average across all samples. Higher values indicate better-defined clusters. Due to its computational complexity, the SC is estimated using a random subsample of $50\,000$ points.
 \item The \emph{Calinski--Harabasz index} (\emph{CH}), which is defined as the ratio of the sum of between-clusters dispersion and of within-cluster dispersion. Higher scores indicate dense and well-separated clusters. For further details, we refer the reader to the \href{https://scikit-learn.org/stable/modules/clustering.html\#overview-of-clustering-methods}{\texttt{scikit-learn} documentation}.
\end{itemize}
Unlike the RI, neither the SC nor the CH measures require knowledge of the ground truth labels.

Table~\ref{t:solvers} summarizes the performance of the evaluated solvers. We observe that SNSM exhibits a competitive computational cost compared to the \texttt{scikit-learn} implementations, even outperforming them in some instances. In particular, for the \textsc{kddcup99} dataset, SNSM attained the lowest running time, while maintaining a clustering quality comparable to that of K-means and significantly better than MBKM in terms of function value and clustering metrics. Regarding clustering quality, SNSM (both $m=0$ and $m=5$) consistently matched or improved upon the results obtained by K-means across the considered metrics, and clearly outperformed MBKM. Recall that MBKM's random mini-batch sampling mechanism struggles on  datasets with  high concentration of exact duplicates, such as \textsc{kddcup99}, which may explain the poor values of the objective function and the SC and CH metrics.

\begin{table}[!ht]
	\centering
	\resizebox{.85\textwidth}{!}{%
		\begin{tblr}{
				width=\linewidth,
				colspec={Q[l] Q[l] Q[c] Q[c] Q[c] Q[c] Q[c]},
				cell{1}{1,2,3,4}={r=2}{c},
				cell{1}{5}={c=3}{c},
				cell{2}{5,6,7}={c},
				hline{1}={-}{0.08em},
				hline{2}={5-7}{0.02em},
				hline{3}={-}{0.05em},
			}
			Data sets & Method & Time & Function value & Clustering quality & & \\
			& & & & RI & SC & CH \\
			
			\textsc{covtype} & K-means & 2.4338 & {\BB 7.7$\mathbf{\times 10^{5}}$} & {\BB 0.5762} & {\BB 0.3302} & {\BB 4.5$\mathbf{\times 10^{5}}$} \\
			& MBKM & {\BB 0.2810} & {8.2$\times 10^{5}$} & 0.5821 & 0.2940 & {4.2$\times 10^{5}$} \\
			& SNSM $m=0$ & 0.4990 & {\BB 7.7$\mathbf{\times 10^{5}}$} & {\BB 0.5762} & {\BB 0.3302} & {\BB 4.5$\mathbf{\times 10^{5}}$} \\
			& SNSM $m=5$ & 0.4681 & {\BB 7.7$\mathbf{\times 10^{5}}$} & {\BB 0.5762} & {\BB 0.3302} & {\BB 4.5$\mathbf{\times 10^{5}}$} \\
			
			\hline
			\textsc{kddcup99} & K-means & 0.2957 & {\BB 2.1$\mathbf{\times 10^{8}}$} & 0.6671 & {\BB 0.9988} & {\BB 5.9$\mathbf{\times 10^{8}}$} \\
			& MBKM & 0.1991 & {9.8$\times 10^{11}$} & 0.6076 & 0.6164 & 46.07 \\
			& SNSM $m=0$ & 0.0204 & {2.3$\times 10^{8}$} & {\BB 0.6678} & 0.9950 & {5.6$\times 10^{8}$} \\
			& SNSM $m=5$ & {\BB 0.0166} & {2.3$\times 10^{8}$} & {\BB 0.6678} & 0.9950 & {5.6$\times 10^{8}$} \\
			
			\hline
		\end{tblr}%
	}
	\caption{Performance comparison between SNSM, K-means, and MBKM for clustering on two real-world datasets}
	\label{t:solvers}
\end{table}

Lastly, we conducted a stress test to evaluate the scalability of SNSM with respect to problem size. We used the \texttt{make\_blobs} function from \texttt{scikit-learn} to generate synthetic datasets under three distinct configurations. In each configuration, we varied only a single problem parameter while keeping the others fixed:
\begin{itemize}
\item \textbf{Data points}: For $s=10$ and $\ell = 20$, we set $p\in\bigl\{10^5\cdot j : 1\leq j \leq 20  \bigr\}$.

\item \textbf{Data dimension}: For $p=10^6$ and $\ell = 20$, we set $s\in \bigl\{10^2\cdot j : 1\leq j\leq 20  \bigr\}$.

\item \textbf{Clusters}: For $s=100$ and $p=10^6$, we set $\ell \in \bigl\{ 20\cdot j : 1\leq j \leq 20  \bigr\}$.
\end{itemize}

For each configuration, we ran K-means, MBKM, and SNSM ($m=5$) across 10 random seeds. Figure~\ref{fig:stress_tests} presents the average running times on a logarithmic scale. These results highlight SNSM's strong performance compared to the other two algorithms and indicate that SNSM scales at the same rate as the K-means algorithm for all the tested parameters.

\begin{figure}[h!]\centering
\includegraphics[width=0.5\textwidth]{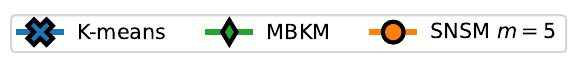}\\
\includegraphics[width=.95\textwidth]{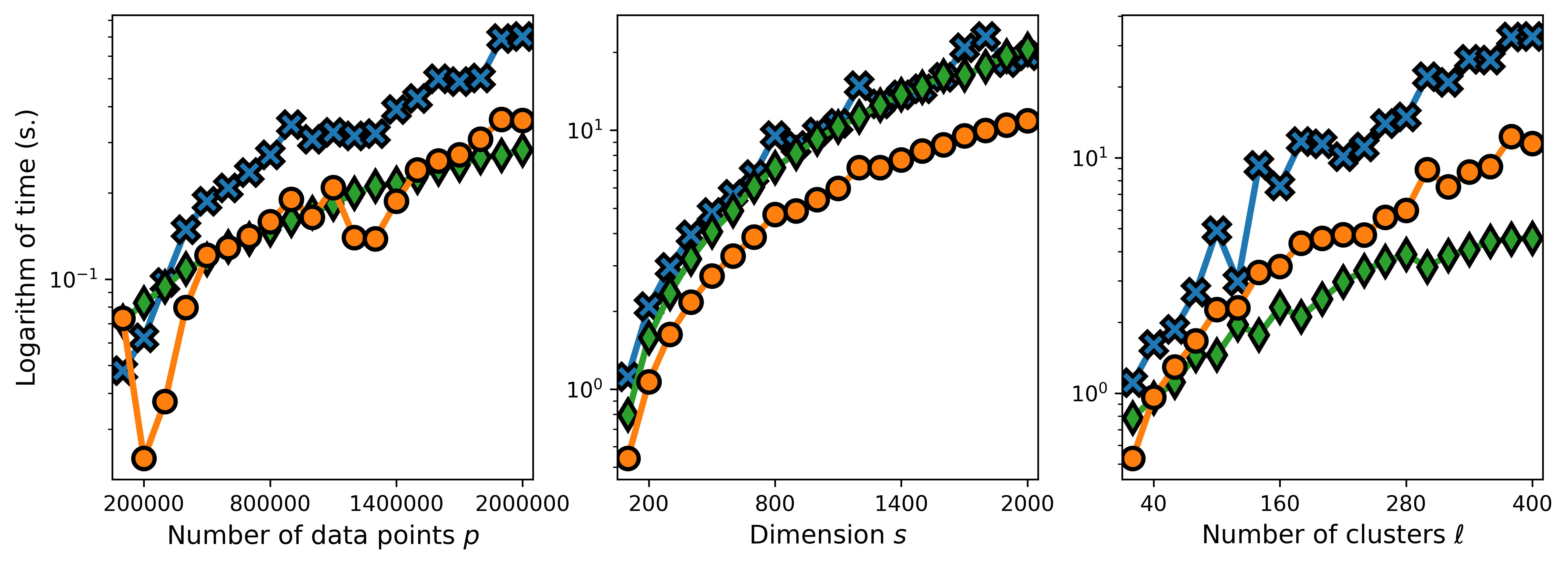}
\caption{Scalability of SNSM, K-means and MBKM under increasing problem sizes}\label{fig:stress_tests}
\end{figure}

\section{Conclusion}\label{sect:conc}

In this work, we proposed a general numerical method for addressing the minimization of upper-$\mathcal{C}^2$ functions.  We showed that this family of functions is characterized by the verification of a nonsmooth local version of the descent lemma. They naturally appear in multiple situations of interest in optimization, such as difference programming and augmented Lagrangian methods.

We introduced in Algorithm~\ref{alg:1} a general nonmonotone subgradient method for the minimization of upper-$\mathcal{C}^2$ functions. This algorithm relies on the computation of a (Clarke) subgradient and the employment of a nonmonotone Armijo-type line search, using a direction that can be conveniently chosen, which allows the use of second-order information. In the main theoretical result, Theorem~\ref{t:conv}, we proved subsequential convergence of the sequence generated by the algorithm to a stationary point of the problem. In addition, we introduced in Algorithm~\ref{alg:SNSM} a specification of Algorithm~\ref{alg:1}, named the \emph{Self-adaptive Nonmonotone Subgradient Method} (\salg),  that incorporates a procedure for automatically selecting the parameters of the nonmonotone line search.

To test the performance of \salg, we conducted numerical comparisons against various specialized algorithms for difference programming, specifically, we considered the minimum sum-of-squares clustering problem and the minimization of a nonconvex quadratic function with integer constraints.
Overall, the flexibility in the choice of search directions, together with the nonmonotone line search strategy, seemed to provide a clear advantage in terms of convergence speed and quality of solutions. Remarkably, SNSM was competitive against the clustering solvers K-means and Mini-Batch K-Means, and it scaled well with increasing problem sizes.



%

\paragraph{{\small Acknowledgements}}{\small
\hspace{-2mm}
This research was partially supported by grant PID2022-136399NB-C21 funded by
ERDF/EU and by MICIU/AEI/10.13039/501100011033. Research of the third author was partially supported by  Centro de
Modelamiento Matem\'{a}tico (CMM), ACE210010 and FB210005, BASAL funds for
center of excellence and ANID-Chile grant: MATH-AMSUD 23-MATH-09 and
MATH-AMSUD 23-MATH-17, ECOS-ANID ECOS320027, Fondecyt Regular 1220886,
Fondecyt Regular 1240335, Fondecyt Regular 1240120 and Exploraci\'on  13220097.
The fourth author was partially supported by grant Fondecyt Postdoctorado 3250039 from ANID Chile.

\bibliographystyle{siam}

\end{document}